\newtheorem{corollary}{Corollary}[section]
\newtheorem{lemma}[corollary]{Lemma}
\newtheorem{proposition}[corollary]{Proposition}
\newcommand{\Prob} {{\mathbb P}}
\newcommand{\E}{{\mathbb E}}
\newcommand{\dist}{{\rm dist}}
\newcommand{\x}{{\bf x}}
\newcommand{\y}{{\bf y}}
\newcommand {{\ball}} {{\mathcal B}}
\def \p {\partial}
\def \Half {{\mathbb H}}
\def \eset {\emptyset}
\def \paths {{\cal K}}
\newcommand{{\slegreen}} {{\mathbb G}}
\def \linehere { {\hrule}}
\def \labove { \mtwo \linehere \linehere \linehere \ms   }
\def \lbelow {{\ms \linehere \linehere \linehere \mtwo}}
\def \mtwo {{\medskip \medskip}}
\def \ms {{\medskip}}
\newcommand {{\whoknows}} {{\mathcal C}}
\newenvironment{definition}[1][Definition]{\begin{trivlist}
\item[\hskip \labelsep {\bfseries #1}]}{\end{trivlist}}
\newenvironment{advanced}
{ \labove \begin{quote} \begin{small}}
{ \end{small}\end{quote} \lbelow }
\def \begad{\begin{advanced}}
\def \endad{\end{advanced}}
\newcommand {\exc} {{\mathcal E}}
\newcommand{{\rect}} {{\mathcal R}}
\newcommand {{\hulls}}  {{\mathcal M}}
\newcommand {{\stuff}} {{\rm fill }}
\newcommand{{\dyadic}}  {{\mathcal Q}}
\newcommand{{\partition}} {{\mathcal P}}
\newcommand{{\azero}}  {\sim_0}
\newcommand{{\api}} {\sim_\pi} 
\newcommand {{\phiset}}  {{\mathcal X}}
 \newcommand {{\crad}} {{\rm crad}}
\newcommand {{\eb}}  {{\bf e}}
\newcommand {{\cent}} {{\bf c}}
\newcommand  \z {{\bf z}}
\newcommand {{\curves}} {\paths}
\newcommand {{\measures}} {{\mathcal M}}
\newcommand{{\brown}}  {{\nu}}
\newcommand{{\osc}}  {{\rm osc}}
\newcommand  {{\rbrown}} {{\mu}}
\newcommand{{\lcurves}}{{\curves_L}}
\newcommand{{\bcurves} } {\curves^{\rm bub}}
\newcommand {{\bbrown}}{{\nu^{\rm bub}}}
\newcommand {{\n}} {{\bf n}}
\newcommand{{\brownroot}} {\brown^{\rm r}}
\newcommand{{\dhalf}}{{\mathcal H}}
\newcommand {{\bgamma}}  {{\boldsymbol \gamma}}
\def \sle {SLE_\kappa}
\title{On the smoothness of the partition function for
multiple Schramm-Loewner evolutions}
\author{Mohammad Jahangoshahi\\
Gregory F. Lawler\thanks{Research supported by
National Science Foundation grant DMS-1513036} }
\begin{document}

\maketitle
 
\begin{abstract}
We consider the measure on multiple chordal Schramm-Loewner evolution ($SLE_\kappa$)
curves.  We 
establish a derivative estimate and use it to
give a direct proof that the partition function
is $C^2$ if $\kappa < 4$.  

\end{abstract}

\section{Introduction}


The (chordal) Schramm-Loewner evolution with
parameter $\kappa >0$   ($SLE_\kappa$) is a measure on
curves connecting two distinct boundary points $z,w$ of a simply
connected domain $D$.  As originally defined by Schramm \cite{Oded}, this
is a probability measure on paths. If $\kappa \leq 4$,   the measure  is supported on simple
curves that do not touch the boundary. 
Although   Schramm \cite{Oded}
originally defined $\sle$ as a probability measure,
if $\kappa \leq 4$ and $z,w$ are locally
analytic boundary points, it is natural to consider $\sle$ as a
finite 
measure $\mu(z,w)$  with partition function, that is,
as a  measure with total mass $\Psi_D(z,w)=H_D(z,w)^b$. 
Here $H$ denotes
the boundary Poisson kernel normalized so that $H_\Half(0,x) = x^{-2}$
and $b = (6-\kappa)/2\kappa$ is the boundary scaling
exponent. If $f: D \rightarrow f(D)$
  is a conformal transformation, then
  \[     H_D(z,w) = |f'(z)| \, |f'(w)| \, H_{f(D)}(f(z),f(w)).\]
 
 There are several reasons for considering $\sle$ as a measure with a total mass. First, $\sle$ is known to be the scaling limit of various two-dimensional discrete models that are considered as measures with partition functions, and hence
  it is natural to consider the (appropriately normalized)
  partition function in the scaling limit. Second, the ``restriction property'' or ``boundary
  perturbation'' can be described more naturally for the 
 nonprobability measures; see \eqref{apr27.1} below.  This description leads to one
 way to define $\sle$ in multiply connected domains or, as is
 important for this paper, for multiple $\sle$ in a simply connected
 domain.   See \cite{Parkcity, Annulus} for more information.
  We write $\mu^\#_D(z,w) = \mu_D(z,w)/  
\Psi_D(z,w)$ for the probability measure which is well
defined even for rough boundaries.  

  The definition of the measure on multiple $\sle$ paths immediately
  gives a partition function defined as the total mass 
  of the measure.  The measure on multiple $\sle$ 
  paths  \[   \bgamma = (\gamma^1,\ldots,
    \gamma^n)\] has been constructed in \cite{Dub,KL,Parkcity}. Even though the definition in \cite{KL} is given for the so-called ``rainbow'' arrangement of the boundary points, it can be easily extended to the other arrangements \cite{Dub, Stat}.
   One can see that unlike $\sle$ measure on single curves, conformal invariance and domain Markov property do not uniquely specify the measure when $n\geq 2$. This definition makes it unique
    by requiring the measure to satisfy the restriction property, which is explained in Section \ref{defs}.
   
Study of the multiple $\sle$ measure involves characterizing the partition function. For $n=2$, the partition function is explicitly given in terms of the hypergeometric function. 
For $n\geq 3$, the goal is to characterize the partition function by a particular second-order PDE. 
However, it does not directly follow from the definition that the partition function is $C^2$. There are two main approaches to address this problem.
 One approach is to show that the PDE system has a solution and use it to describe the partition function. 
In \cite{Dub}, it is shown that a family of integrals taken on a specific set of cycles satisfy the required PDE system. 
In \cite{Eve}, conformal field theory and partial differential equation techniques such as H{\"o}rmander's theorem are used to show that the partition function satisfies the PDE system.
 The other approach, which is the one we take in this work, is to directly prove that the partition function is $C^2$. Then It\^{o}'s formula can be used next to show that the partition function satisfies the PDEs.
 
   The basic idea
of our proof
is to interchange derivatives and expectations in expressions for the
partition function.  This interchange needs justification and we prove
an estimate about $\sle$ to justfiy this.

Here we summarize the paper.  We finish this introduction by
reviewing examples of partition functions for $\sle$.
Definitions and properties of multiple $\sle$ and the outline of the proof are given in Section \ref{defs}. 
Section \ref{japansec} includes an estimate for $\sle$ using techniques similar to the ones in \cite{Japan}. 
Proof of Lemma \ref{mar19.lemma1}, which explains estimates for derivatives of the Poisson kernel is given in Section \ref{lemmasec}.

\subsection{Examples}\label{exmaples}
 
\begin{itemize}
\item \textbf{$\sle$ in a subset of $\Half$.} Let $\kappa\leq 4$ and suppose $D\subset \Half$ is a simply connected domain such that $K=\Half\setminus D$ is bounded and $\dist(0,K)>0$. Also, assume that $\gamma$ is parameterized with half-plane capacity. By the restriction property we have
\begin{equation}  \label{apr27.1}
\frac{d\mu_D(0,\infty)}{d\mu_\Half(0,\infty)}(\gamma)=1\{\gamma\cap K=\emptyset\}\exp\left\{\frac{\textbf{c}}{2}m_\Half(\gamma,K)\right\},
\end{equation}
where $m_\Half(\gamma,D)$ denotes the Brownian loop measure of the loops that intersect both $\gamma$ and $K$ and 
\[
\cent =\frac{(6-\kappa)(3\kappa-8)}{2\kappa}
\]
is the \emph{central charge}.
 We normalize the partition functions, so that $\Psi_\Half(0,\infty)=1$. For an initial segment of the curve $\gamma_t$, let $g_t:\Half\setminus\gamma_t\to\Half$ be the unique conformal transformation with $g_t(z)=z+\text{o}(1)$ as $z\to\infty$. Then 
\[
\partial_tg_t(z)=\frac{a}{g_t(z)-U_t},
\]
where $a=2/\kappa$ and $U_t$ is a standard Brownian motion. Suppose $\gamma_t\cap K=\emptyset$ and let $D_t=g_t(D\setminus\gamma_t)$. One can see that 
\[
m_\Half(\gamma_t,K)=-\frac{a}{6}\int_0^tS\Phi_s(U_s)ds,
\]
where $S$ denotes the Schwarzian derivative and $\Phi_s(U_s)=H_{D_s}(U_s,\infty)$. It follows from conditioning on $\gamma_t$ that 
\[
M_t=\exp\left\{\frac{\textbf{c}}{2}m_\Half(\gamma_t,K)\right\}\Psi_{D_t}(U_t,\infty)
\]
is a martingale. We assume the function $V(t,x)=\Psi_{D_t}(x,\infty)$ is $C^2$ for a moment. Therefore, we can apply   It\^{o}'s formula and we get
\[
-\frac{a\textbf{c}}{12}V(t,U_t)\,S\Phi_t(U_t)+{\partial_t V(t,U_t)}+\frac{1}{2}\partial_{xx} V(t,U_t)=0.
\]
Straightforward calculation shows that $V(t,x)=H_{D_t}(x,\infty)^b$ is $C^2$ and satisfies this PDE. Here, $b$ is the \emph{boundary scaling exponent}
\[
b=\frac{6-\kappa}{2\kappa}.
\]
\item \textbf{Other examples.} Similar ideas were used in \cite{KL} to describe the partition function of two $\sle$ curves with a PDE.
 Differentiability of the partition function was justified using the explicit form of the solution in terms of the hypergeometric function.
  The PDE system in \cite{Annulus} characterizes the partition function of the annulus $\sle$. 
  That PDE is more complicated and one cannot find an explicit form for the solution. 
  In fact, it is not easy to even show that the PDE has a solution. Instead, it was directly proved that the partition function is $C^2$ and It\^{o}'s formula was used to derive the PDE.

\end{itemize}

\section{Definitions and Preliminaries}\label{defs}
We will consider  the multiple $\sle$ measure
  only for $\kappa \leq 4$
 on simply connected domains $D$ and distinct
locally analytic boundary points  $\x = (x_1,\ldots,x_n),
\y = (y_1,\ldots,y_n)$. 
The measure is supported on $n$-tuples of curves
 \[   \bgamma = (\gamma^1,\ldots,
    \gamma^n),\]
 where $\gamma^j$ is a curve connecting $x_j$ to $y_j$ in $D$.
 If $n = 1$, then $\mu_D(x_1,y_1)$ is $SLE_\kappa$ from $x_1$ to $y_1$
 in $D$  with total mass $H_D(x_1,y_1)^b
 $ whose corresponding probability measure
  $\mu^\#_D(x_1,y_1) = \mu_D(x_1,y_1)/H_D(x_1,y_1)^b $ is
  (a time change of)
  $SLE_\kappa$ from $x_1$ to $y_1$ as defined by  
  Schramm.

 \begin{definition} If $\kappa \leq 4$ and
  $n \geq 1$,
 then $\mu_D(\x,\y)$ is the measure absolutely continuous with respect
 to $\mu_D(x_1,y_1) \times \cdots \times \mu_D(x_n,y_n)$ with
 Radon-Nikodym derivative
 \[         Y(\bgamma) :=    I(\bgamma) \, \exp\left\{
 \frac \cent 2\sum_{j=2}^n m\left[K_j(\bgamma) \right]   \right\}. \]
 Here $\cent = (6-\kappa)(3\kappa - 8)/2\kappa $ is the
 central charge, $I(\bgamma)$ is the indicator function of the event
 \[     \{\gamma^j \cap \gamma^k = \eset , 1 \leq j < k \leq n\}, \]
 and $m\left[K_j(\bgamma)\right]$ denotes the Brownian loop measure of loops
 that intersect at least $j$ of the paths $\gamma^1,\ldots,\gamma^n$.
 \end{definition}
 
  Brownian loop measure is a measure on (continuous) curves $\eta:[0,t_\gamma]\to\mathbb{C}$ with $\eta(0)=\eta(t_\eta)$. 
  Let $\nu^\#(0,0;1)$ be the law of the Brownian bridge starting from 0 and returning to 0 at time 1. 
  Brownian loop measure can be considered as the measure
\[
m=\text{area}\times\left(\frac{1}{2\pi t^2}dt\right)\times\nu^\#(0,0;1)
\]
on the triplets $(z,t_\eta,\tilde{\eta})$, where $\tilde\eta(t)=t_\eta^{1/2}\eta(t/t_\eta)$ for $t\in[0,1]$. 
For a domain $D\subset\mathbb{C}$, we denote the restriction of $m$ to the loops $\eta\subset D$ by $m_D$.  
One important property of $m_D$ is conformal invariance. More precisely, if $f:D\to f(D)$ is a conformal transformation, then 
\[
f\circ m_D= m_{f(D)},
\]
where $f\circ m_D$ is the pushforward measure.

 Note that if $\sigma$ is a permutation of $\{1,\ldots,n\}$ and
 $\bgamma_\sigma = (\gamma^{\sigma(1)},\ldots,\gamma^{\sigma(n)})$, 
 then $Y(\bgamma) = Y(\bgamma_\sigma)$. The partition function
 is the total mass of this measure
 \[  \Psi_D(\x,\y) = \| \mu_D(\x,\y)\|. \]
 We also write
 \[  \tilde \Psi_D(\x,\y) = \frac{\Psi_D(\x,\y)}{\prod_{j=1}^n H_D(x_j,y_j)^b},\]
 which can also be written as
 \[   \tilde \Psi_D(\x,\y)  = \E[Y] , \]
 where the expectation is with respect to the probability measure
 $\mu_D^\#(x_1,y_1) \times \cdots \times \mu_D^\#(x_n,y_n)$.
 Note that  $\tilde \Psi_D(\x,\y)$ is a conformal invariant,
 \[   f\circ \tilde \Psi_D(\x,\y) = \tilde \Psi_{f(D)}
  (f(\x),f(\y)), \]
  and hence is well defined even if the boundaries are rough.
   Since $SLE_\kappa$ is
 reversible \cite{Zhan}, interchanging $x_j$ and $y_j$ does not change
 the value. 
 
 To compute the partition function we use an alternative
 description of the measure  $\mu_D(\x,\y)$.  We will give
 a recursive definition.
 \begin{itemize}
 \item  For $n=1$, $\mu_D(x_1,y_1)$ is the usual $SLE_\kappa$
 measure with total mass $H_D(x_1,y_1)^b$.
 \item  Suppose the measure has been defined for all $n$-tuples
 of paths.  Suppose $\x = (x',x_{n+1}), \y = (y',y_{n+1})$
 are given and write an $(n+1)$-tuple of paths as
 $\bgamma = (\bgamma',\gamma^{(n+1)})$.  
 \begin{itemize}
 \item The marginal
 measure on $\bgamma'$ induced by
 $\mu_D(\x,\y)$  is absolutely continuous with
 respect to $\mu_D(\x',\y')$ with Radon-Nikodym derivative
 $H_{\tilde D}(x_{n+1},y_{n+1})^b$.  Here $\tilde D$ is the
 component of $D \setminus  \bgamma'$ containing
 $x_{n+1},y_{n+1}$ on its boundary.  (If there is no such
 component, then we set $H_{\tilde D}(x_{n+1},y_{n+1}) = 0$
 and $\mu_D(\x,\y)$ is the zero measure.)
 \item Given $\bgamma'$, the curve $\gamma^{n+1}$
 is chosen using the probability distribution
 $\mu^\#_{\tilde D}(z_{n+1},y_{n+1})$.
 
 \end{itemize}
 
 \end{itemize}

 One could try to use this description of the measure as the
 definition, but it is not obvious that it is consistent.  However, one can see that the first definition satisfies this property using the following lemma.
\begin{lemma}
Let $\bgamma$ denote a $(n+1)$-tuple of paths which we write as
 $\bgamma = (\bgamma',\gamma^{(n+1)})$, and let $\tilde{D}$ be the connected component of $D\setminus\bgamma'$ containing the end points of $\gamma^{(n+1)}$ on its boundary. Then 
 \[
 \sum_{j=2}^{n+1} m\left[K_j(\bgamma)\right]=\sum_{j=2}^n m\left[K_j(\bgamma')\right]+m_D(\gamma^{(n+1)},\,D\setminus\tilde{D}).
 \]
\end{lemma}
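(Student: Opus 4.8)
The plan is to turn the identity into a pointwise statement about the integrand of the Brownian loop measure and then to match two families of loops that differ by a null set. Throughout, loops are taken in $D$ (this is the loop measure $m_D$ implicit in the definition of the $K_j$), and for a loop $\eta$ I set $a_i(\eta)=1\{\eta\cap\gamma^i\neq\eset\}$ for $1\le i\le n+1$, together with $N(\eta)=\sum_{i=1}^{n+1}a_i(\eta)$ and $N'(\eta)=\sum_{i=1}^{n}a_i(\eta)$, so that $N=N'+a_{n+1}$, $N'\le n$, and $N\le n+1$. Since $\eta$ intersects at least $j$ of the curves exactly when $N(\eta)\ge j$, we have $m[K_j(\bgamma)]=\int 1\{N(\eta)\ge j\}\,dm(\eta)$, and similarly for $\bgamma'$. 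The number of integers $j\in\{2,\dots,n+1\}$ with $N\ge j$ is $\max\{N-1,0\}$, so I would first rewrite the two sums as
\[
\sum_{j=2}^{n+1} m[K_j(\bgamma)]=\int \max\{N(\eta)-1,0\}\,dm(\eta),\qquad
\sum_{j=2}^{n} m[K_j(\bgamma')]=\int \max\{N'(\eta)-1,0\}\,dm(\eta).
\]

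Next I would compute the difference of the integrands. Using $N=N'+a_{n+1}$ with $a_{n+1}\in\{0,1\}$, a one-line case check gives
\[
\max\{N-1,0\}-\max\{N'-1,0\}=a_{n+1}(\eta)\,1\{N'(\eta)\ge 1\}:
\]
the difference vanishes unless $\eta$ meets $\gamma^{(n+1)}$, and when it does it equals $1$ precisely when $\eta$ also meets at least one of $\gamma^1,\dots,\gamma^n$. Integrating, the left-hand side of the lemma minus the first term on the right equals the $m_D$-measure of
\[
\{\eta\subseteq D:\ \eta\cap\gamma^{(n+1)}\neq\eset\ \text{ and }\ \eta\cap\bgamma'\neq\eset\},\qquad \bgamma'=\gamma^1\cup\cdots\cup\gamma^n .
\]
It then remains to show that this quantity equals $m_D(\gamma^{(n+1)},D\setminus\tilde D)$.

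The final and main step is this geometric identity, which I would prove by showing that the two loop families $\{\eta\cap\gamma^{(n+1)}\neq\eset,\ \eta\cap\bgamma'\neq\eset\}$ and $\{\eta\cap\gamma^{(n+1)}\neq\eset,\ \eta\cap(D\setminus\tilde D)\neq\eset\}$ coincide up to a null set. One inclusion is immediate: the interiors of $\gamma^1,\dots,\gamma^n$ are removed in forming $\tilde D$, so $\bgamma'\cap D\subseteq D\setminus\tilde D$ and any loop in $D$ meeting $\bgamma'$ meets $D\setminus\tilde D$. For the converse, note that loops through a fixed point are $m$-null; since the endpoints of $\gamma^{(n+1)}$ are two fixed points, almost every loop meeting $\gamma^{(n+1)}$ meets it at an interior point, which lies in the open set $\tilde D$, so that $\eta\cap\tilde D\neq\eset$. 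If such an $\eta$ also meets $D\setminus\tilde D$ but avoided $\partial\tilde D\cap D$, then $\eta$ would lie in the disjoint union of the open sets $\tilde D$ and $D\setminus\overline{\tilde D}$, and connectedness of $\eta$ together with $\eta\cap\tilde D\neq\eset$ would force $\eta\subseteq\tilde D$, contradicting $\eta\cap(D\setminus\tilde D)\neq\eset$. Thus $\eta$ meets $\partial\tilde D\cap D$. The topological crux, which I expect to require the most care, is that since $\tilde D$ is a connected component of the open set $D\setminus\bgamma'$ one has $\partial\tilde D\cap D\subseteq\bgamma'$; consequently $\eta$ meets $\bgamma'$, completing the converse and hence the lemma.
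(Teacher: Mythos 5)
Your proof is correct, and it reaches the result by a recognizably different route through the same two milestones as the paper. The paper argues at the level of sets of loops: it splits $K_j(\bgamma)$ into the loops $K^1_j(\bgamma)$ that meet $\gamma^{(n+1)}$ and the loops $K^2_j(\bgamma)$ that do not, observes that $K_j(\bgamma')=K^1_{j+1}(\bgamma)\cup K^2_j(\bgamma)$ with $K^2_{n+1}(\bgamma)=\eset$, and telescopes; the remaining identity $m[K^1_2(\bgamma)]=m_D(\gamma^{(n+1)},D\setminus\tilde D)$, its equation \eqref{eq_brwn1}, is asserted without proof. You replace the set decomposition by the layer-cake identity $\sum_{j=2}^{n+1}1\{N\ge j\}=\max\{N-1,0\}$ (with $N,N'$ counting how many paths of $\bgamma,\bgamma'$ a loop hits) together with the pointwise case check $\max\{N-1,0\}-\max\{N'-1,0\}=a_{n+1}\,1\{N'\ge1\}$; this is equivalent bookkeeping, arguably easier to verify, and if you integrate the identity in its additive form (all terms nonnegative) you avoid even a formal $\infty-\infty$ concern in the subtraction step. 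What your write-up genuinely adds is a proof of the geometric identification that the paper takes for granted. Two remarks there. First, the null-set argument is unnecessary: a loop $\eta\subset D$ is a compact subset of the open set $D$, while the endpoints of $\gamma^{(n+1)}$ lie on $\p D$, so \emph{every} loop meeting $\gamma^{(n+1)}$ meets its interior, which lies in $\tilde D$. Second, the ``topological crux'' $\p\tilde D\cap D\subseteq\bgamma'$ that you flagged as delicate is the standard two-line fact that the boundary of a connected component of an open set lies in the boundary of that open set: if $z\in\p\tilde D\cap D$ were not in $\bgamma'$, it would lie in some component $V$ of the open set $D\setminus\bgamma'$; then $V$ is an open set containing $z$ and hence meeting $\tilde D$, forcing $V=\tilde D$ and $z\in\tilde D$, contradicting that the open set $\tilde D$ is disjoint from its boundary. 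With that observation your argument is complete and fully rigorous.
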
 
\begin{proof}
Let $K^1_j(\bgamma)$ denote the set of loops in $K_j(\bgamma)$ that  intersect $\gamma^{(n+1)}$ and let $K^2_j(\bgamma)$ denote the set of loops that do not intersect  $\gamma^{(n+1)}$. Then 
\begin{equation}\label{eq_brwn1}
m\left[K^1_2(\bgamma)\right]=m_D(\gamma^{(n+1)},\,D\setminus\tilde{D}).
\end{equation}
Note that $K^1_j(\bgamma)$ is equivalent to the set of loops in $D$ that intersect  $\gamma^{(n+1)}$ and at least $j-1$ paths of $\bgamma'$. Moreover, $K^2_j(\bgamma)$ is equivalent to the set of loops that intersect at least $j$ paths of $\bgamma'$, but do not intersect $\gamma^{(n+1)}$. Therefore,
\[
K_j(\bgamma')=K^1_{j+1}(\bgamma)\cup K^2_{j}(\bgamma).
\]
Now the result follows from this, the fact that $K^2_{n+1}(\bgamma)=\emptyset$ and \eqref{eq_brwn1}.
\end{proof}
 We can also take the marginals in a different order. For
 example, we could have defined the recursive step above as follows.
 \begin{itemize}
 \item  The marginal measure on $\gamma^{n+1}$ induced
 by $\mu_D(\x,\y)$ is absolutely continuous with respect
 to $\mu_D(x_{n+1},y_{n+1})$ with Radon-Nikodym
 derivative $\Psi_{\tilde D}(\x',\y')$ where
 $\tilde D = D \setminus \gamma$.  (It is possible that
 $\tilde D$ has two separate components in which case we
  multiply the partition functions on the two components.)
 \end{itemize}

We will consider boundary points on the real line. 
We write just $H,\Psi,\tilde \Psi,\mu,\mu^\#$ for $H_\Half,
\Psi_\Half, $ $\tilde \Psi_\Half, $ $\mu_\Half,$ $\mu^\#_\Half$;
and note that
 \[ \tilde  \Psi(\x,\y) =
 \E\left[   Y\right] = \Psi(\x,\y) \,  \prod_{j=1}
   ^n |y_j - x_j|^{ 2b}, \]
   where the expectation is with respect to the
   probability measure  
\[  \mu^\#(x_1,y_1) \times \cdots \times  \mu^\#(x_n,y_n)
.\]  

\begin{itemize}

\item If $n = 1$, then $Y \equiv 1$ and $\tilde \Psi(\x,\y) = 1$.

\item  For $n = 2$ and $\bgamma = (\gamma^1,\gamma^2)$,
then
\[    \E[Y \mid \gamma^1] = \left[\frac{H_{D\setminus \gamma^1}
  (x_2,y_2)}{H_D(x_2,y_2)} \right]^b.\]
 The right-hand side is well defined even for non smooth boundaries
 provided that $\gamma^1$ stays a positive distance from $x_2,y_2$.
 In particular,
 \[   \E[Y] = \E\left[\E(Y\mid \gamma^1)\right]
     = \E\left[\left(\frac{H_{D\setminus \gamma^1}
  (x_2,y_2)}{H_D(x_2,y_2)} \right)^b\right]  \leq 1.\]
  If $8/3 < \kappa \leq 4$, then $\cent >0$ and $Y > 1$
  on the event $I(\bgamma)$ so the inequality 
  $\E[Y] \leq 1$ is not obvious.
  
  \item  More generally,  if $\bgamma = (\bgamma',\gamma^{n+1})$,
  \[    \E[Y \mid \bgamma']
       = Y(\bgamma') \,  \left[\frac{H_{D\setminus \bgamma'}
  (x_{n+1},y_{n+1})}{H_D(x_{n+1},y_{n+1})} \right]^b   \leq Y(\bgamma').\]
 Using this we see that $\tilde \Psi_D(\x,\y) \leq 1$.
 
\item For $n = 2$, if $x_1 = 0,
y_1 = \infty, y_2 = 1$ and $x_2 = x$ with $0 < x  <1$, we have (see,
for example, \cite[(3.7)]{KL})
\begin{equation}  \label{mar18.4}
 \tilde \Psi(\x,\y) = \phi(x) := \frac{\Gamma(2a) \, \Gamma(6a-1)}
 {\Gamma(4a) \, \Gamma(4a-1)} \, x^a\,
    F(2a,1-2a,4a;x) , 
    \end{equation}
    where $F =$ $_2F_1$ denotes the hypergeometric function
    and $a = 2/\kappa $.  This is computed by
  finding
  \[   \E\left[  H_{\Half\setminus \gamma^1}
   (x,1)^b\right].\]
 In fact, this calculation is valid for $\kappa < 8$ if it
 is interpreted as
 \[   \E\left[  H_{\Half\setminus \gamma^1}
   (x,1)^b; H_{\Half\setminus \gamma^1}
   (x,1) > 0\right].\]
    
  \end{itemize}
It will be useful to write
 the conformal invariant \eqref{mar18.4}  
 in a different way.  If $V_1,V_2$ are two arcs of
 a domain $D$,  let
 \[    \exc_D(V_1,V_2) = \int_{V_1} \int_{V_2} \, H_D(z,w)
  \, |dz| \, |dw|.\]
This is $\pi$ times  the usual excursion measure between $V_1$
 and $V_2$; the factor of $\pi$ comes from our choice
 of  Poisson
 kernel.
Note that
\[   \exc_\Half((-\infty,0], [x,1])
   =\int_x^1  \int_{-\infty}^0 \frac{ dr\, ds}
                { (s-r)^{2}}  = \int_x^1 \frac{dr}{r} =
                   \log(1/x),\]
Hence we can write \eqref{mar18.4} as 
$ \phi\left(\exp\left\{-\exc_\Half((-\infty,0], [x,1])
 \right\} \right) .$
  More generally,
 if $x_1  < y_1 < x_2 < y_2$,
 \[   \tilde \Psi(\x,\y)
  = \phi\left(\exp\left\{-\exc_\Half([x_1,y_1], [x_2,y_2])
 \right\} \right)
    = \phi\left(\exp \left\{-\int_{x_1}^{y_1}
      \int_{x_2}^{y_2} \frac{dr\, ds}
       { (s-r)^2}\right\} \right),\]
and if $D$ is a simply connected
  subdomain of $\Half$ containing $x_1,y_2,x_2,y_2$
  on its boundary, then 
 \begin{equation}  \label{pat.1}
     \tilde \Psi_D(\x,\y)
     =  \phi\left(\exp\left\{-\exc_D( [x_1,y_1], [x_2,y_2])
 \right\} \right)
   =  \phi\left(\exp \left\{-\int_{x_1}^{y_1}
      \int_{x_2}^{y_2} H_D(r,s) \,{dr\, ds}
       \right\} \right).
       \end{equation}
  This expression is a little bulky but it allows for easy
  differentiation with respect to $x_1,x_2,y_1,y_2$.
       
       At this point we can state the main proposition.

\begin{proposition}  \label{mainprop}
$\Psi$ and $\tilde \Psi$ are $C^2$ functions.
\end{proposition}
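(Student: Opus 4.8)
The plan is to prove the statement for $\tilde\Psi$; since $\Psi(\x,\y)=\tilde\Psi(\x,\y)\,\prod_{j=1}^n|y_j-x_j|^{-2b}$ and the product factor is $C^\infty$ wherever the points are distinct, $\Psi\in C^2$ follows at once from $\tilde\Psi\in C^2$. Working from $\tilde\Psi(\x,\y)=\E[Y]$, the whole proof amounts to justifying that one may differentiate twice under the expectation and that the resulting expressions are continuous in $(\x,\y)$. The feature that makes this feasible is that each boundary point appears in $Y$ only as an endpoint of one curve, so by conditioning on the remaining curves we can isolate its dependence inside explicit boundary-Poisson-kernel factors.

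Concretely, fix the one or two coordinates in which we wish to differentiate and condition on every curve that does \emph{not} have one of them as an endpoint. If both coordinates lie on a single curve $\gamma^i$, the marginal identity $\E[Y\mid\bgamma']=Y(\bgamma')\,(H_{D\setminus\bgamma'}(x_i,y_i)/H_D(x_i,y_i))^b$ shows that, after conditioning, $x_i$ and $y_i$ enter only through the ratio $(H_{\hat D}(x_i,y_i)/H_D(x_i,y_i))^b$, where $\hat D$ is the component of $D$ minus the conditioned curves containing these points; the law of the conditioned curves and the prefactor $Y(\bgamma')$ do not depend on $(x_i,y_i)$. If the two coordinates lie on different curves $\gamma^i,\gamma^j$, conditioning on the other $n-1$ curves leaves, by conformal invariance and the restriction property, a two-curve configuration in the random domain $\hat D$; the dependence on the four endpoints is then carried by the Poisson-kernel ratios together with the two-curve invariant, which by \eqref{pat.1} equals $\phi(\exp\{-\exc_{\hat D}([x_i,y_i],[x_j,y_j])\})$ with $\phi$ smooth and the endpoints entering only as the limits of the excursion integral. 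In either case the differentiation variables have been placed inside explicit functions of the boundary points, while the randomness (the conditioned curves) is independent of them.

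Formally differentiating these expressions produces integrands built from first and second derivatives of $H_{\hat D}$ in the marked points (for the excursion integral only values and single integrals of $H_{\hat D}$ appear, together with $\phi',\phi''$). Here Lemma \ref{mar19.lemma1} supplies the needed bounds, controlling these derivatives by $H_{\hat D}$ times negative powers of the distance from the marked point to the conditioned curves, i.e.\ the scale at which the boundary near that point is being pinched. To legitimize the interchange of $\partial$ and $\E$---and to obtain continuity rather than mere existence---one must dominate these integrands, uniformly for $(\x,\y)$ in a compact set away from coincidences, by an integrable function. This is exactly the role of the $\sle$ estimate of Section \ref{japansec}: it controls the probability that the conditioned curves approach a marked boundary point, showing that the expectation of the product of the decaying factor $(H_{\hat D}/H_D)^b$ with the inverse-power blow-up produced by two derivatives is finite and locally bounded. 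The decay of the Poisson-kernel ratio as a curve pinches a marked point competes against the $\dist^{-2}$ growth from differentiating twice, and precisely when $\kappa<4$---so that $b=(6-\kappa)/2\kappa$ is large enough---the decay wins and yields a uniform integrable bound. Dominated convergence then gives existence and continuity of all first and second partials.

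The main obstacle is this $\sle$ estimate: a quantitative one-point boundary estimate (in the spirit of \cite{Japan}) sharp enough that the expected product of the pinching factor with the twice-differentiated Poisson kernel stays bounded, which is where the hypothesis $\kappa<4$ is genuinely used. The Poisson-kernel derivative bounds of Lemma \ref{mar19.lemma1} are the other technical ingredient, but they are of a more standard distortion type. Once both are in hand, the conditioning reductions above reduce the $C^2$ claim for $\tilde\Psi$, and hence for $\Psi$, to routine differentiation under the expectation.
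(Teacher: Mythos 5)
Your proposal follows essentially the same route as the paper: condition on the curves not carrying the differentiation variables, express the conditional expectation through Poisson-kernel ratios and (for the mixed partial) the explicit two-curve invariant \eqref{pat.1}, bound the resulting derivatives via Lemma \ref{mar19.lemma1}, and justify interchanging derivative and expectation with the estimate of Section \ref{japansec}, which for $\kappa<4$ yields $\E\left[Y\,\Delta^{-2}\right]<\infty$. Apart from a harmless miscount (the mixed-partial case conditions on the other $n-2$ curves, not $n-1$, so that a two-curve configuration remains), this is the paper's argument.
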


It clearly suffices to prove this for $\tilde \Psi$.  The idea is
simple --- we will write the partition function as an expectation
and differentiate the expectation by interchanging the
expectation and the derivatives.  This interchange requires
justification and this is the main work of this paper.

We will use the following fact which is an analogue of
derivative estimates for   positive harmonic functions. 
 The proof is straightforward but we delay
it to Section \ref{lemmasec}.

\begin{lemma} \label{mar19.lemma1}
There  exists $c < \infty$ such that for every $x_1 < y_1 < x_2 < y_2$  the following holds.

\begin{itemize}

\item Suppose $D \subset  \Half $ is  
a simply connected domain whose boundary contains an open
real neighborhood of  $[x_1,y_1]$
and suppose that 
\[   \delta := \min\left\{\lvert x_1-y_1\rvert,\dist\left[\{x_1,y_1\}, \Half \setminus D
\right]\right\} >0.\]
Then if $z_1,z_2 \in \{x_1,y_1\},$
\[   |\p_{z_1}  H_D(x_1,y_1)|
   \leq c\, \delta^{-1} \, H_D(x_1,y_1).\]
\[    |\p_{z_1z_2}   H_D(x_1,y_1)|
   \leq c\, \delta^{-2} \, H_D(x_1,y_1).\]

\item
Suppose $D \subset  \Half $ is  
a simply connected domain whose boundary contains
open real neighborhoods of $[x_1,y_1]$
and $[x_2,y_2]$ and suppose that  
 \[
 \delta:=\min\left\{\{|w_1-w_2|;\,w_1\neq w_2\text{ and } w_1,w_2\in\{x_1,x_2,y_1,y_2\}\},\,\,\dist\left[\{x_1,y_1,x_2,y_2\}, \Half \setminus D\right]
 \right\}.
 \]
Then if $z_1 \in \{x_1,y_1\}, z_2 \in \{x_2,y_2\}$,
\[    |\p_{z_1z_2} \tilde \Psi_D(\x,\y)|
   \leq c\, \delta^{-2} \, \tilde \Psi_D(\x,\y).\]

\end{itemize}
Moreover, the constant can be chosen uniformly in neighborhoods of $x_1,y_1,x_2,y_2$.
\end{lemma}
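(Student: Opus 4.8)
The plan is to read both parts as Harnack-type derivative estimates: the boundary Poisson kernel, viewed in a single boundary variable, is the inward normal derivative of a \emph{positive} harmonic function that vanishes on the (locally flat) real boundary, and such normal derivatives obey the same gradient and Hessian bounds as positive harmonic functions. By the conformal covariance $H_D(z,w)=|f'(z)|\,|f'(w)|\,H_{f(D)}(f(z),f(w))$ applied to $f(z)=z/\delta$, every one of the claimed inequalities is invariant under the scaling $\delta\mapsto1$, so I may assume $\delta=1$ throughout and must only produce \emph{universal} constants.

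For the first bullet, fix $y_1$ and let $v=P_D(\cdot,y_1)$ be the Poisson kernel with pole at $y_1$, normalized so that its inward normal derivative at a real boundary point $x$ equals $H_D(x,y_1)$. Then $v>0$ is harmonic on $D$ and vanishes on the real segment near $x_1$; since $\dist[\{x_1\},\Half\setminus D]\ge\delta$ and $|x_1-y_1|\ge\delta$, Schwarz reflection across $\R$ extends $v$ to an odd harmonic $V$ on a disk $B(x_1,r)$, $r\asymp\delta$, and $H_D(x,y_1)=\p_2 V(x,0)$ for real $x$ near $x_1$, so $\p_{x_1}^k H_D(x_1,y_1)=\p_1^k\p_2 V(x_1,0)$. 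The interior derivative estimates for harmonic functions give $|\p_1^k\p_2 V(x_1,0)|\le C_k\,r^{-(k+1)}\sup_{B(x_1,r/2)}|V|$, while the flat-boundary Harnack principle (compare $v$ with the model $\Im(z-x_1)$) yields $\sup_{B(x_1,r/2)}|V|\le C\,r\,\p_2 V(x_1,0)=C\,r\,H_D(x_1,y_1)$. Combining these for $k=1,2$ gives the $\delta^{-1}$ and $\delta^{-2}$ bounds; the $y_1$-derivatives follow by symmetry of $H_D$. For the mixed derivative $\p_{x_1}\p_{y_1}$ I would iterate: the function $z\mapsto \p_{y_1}H_D(z,y_1)$ is harmonic in $z$ and vanishes on the real segment (since $H_D(\cdot,y_1)\equiv0$ there), hence reflects to an odd harmonic function on $B(x_1,r)$; applying the single-variable estimate in the $y_1$-slot uniformly over $z\in B(x_1,r/2)$ bounds its modulus by $C\delta^{-1}H_D(z,y_1)$, and the sup/Harnack comparison above bounds its supremum by $C\,H_D(x_1,y_1)$, so the interior estimate delivers $|\p_{x_1}\p_{y_1}H_D(x_1,y_1)|\le C\delta^{-2}H_D(x_1,y_1)$.

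For the second bullet I would use the closed form \eqref{pat.1}: with $E=\exc_D([x_1,y_1],[x_2,y_2])=\int_{x_1}^{y_1}\!\int_{x_2}^{y_2}H_D(r,s)\,dr\,ds$ and $t=e^{-E}$ one has $\tilde\Psi_D(\x,\y)=\phi(t)$. Writing $\Theta=t\,d/dt$, a chain-rule computation gives, for a representative mixed derivative,
\[
\p_{x_1}\p_{x_2}\tilde\Psi_D(\x,\y)=(\Theta^2\phi)(t)\,A_1A_2-(\Theta\phi)(t)\,B,
\]
where $A_1=\int_{x_2}^{y_2}H_D(x_1,s)\,ds$, $A_2=\int_{x_1}^{y_1}H_D(r,x_2)\,dr$, $B=H_D(x_1,x_2)$ (the other three choices of $(z_1,z_2)$ are identical up to relabelling). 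I would then assemble two independent families of bounds. On the analytic side, $\phi(t)=t^a\psi(t)$ with $\psi$ analytic and positive on $[0,1)$ and, near $t=1$, $\phi(t)=\phi(1)+c\,(1-t)^{4a-1}+\cdots$ (here $a=2/\kappa\ge1/2$); this gives $|\Theta\phi|\le C\phi$ on $(0,1]$ and $|\Theta^2\phi(t)|\le C\,\phi(t)\,(1-t)^{-(3-4a)_+}$. On the geometric side, domain monotonicity $H_D\le H_{\Half}$ and the explicit half-plane kernel give $A_i\le\delta^{-1}$ and $B\le\delta^{-2}$, while the flat-boundary Harnack inequality $\int_{x_2}^{y_2}H_D(r,s)\,ds\ge c\,\delta\,H_D(r,x_2)$ (and its mirror in $r$) integrates to $A_i\le C\delta^{-1}E$, hence $A_1A_2\le C\delta^{-2}\min(1,E^2)$.

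Combining the two families term by term yields $|\p_{x_1}\p_{x_2}\tilde\Psi_D|\le c\,\delta^{-2}\phi(t)=c\,\delta^{-2}\tilde\Psi_D$. The $\Theta\phi$ term is immediate from $|\Theta\phi|\le C\phi$ and $B\le\delta^{-2}$. The main obstacle is the $\Theta^2\phi$ term: when $8/3<\kappa<4$ (so $1/2<a<3/4$) the factor $\Theta^2\phi$ genuinely blows up like $(1-t)^{4a-3}$ as $t\to1$, i.e.\ as the two intervals separate, so the crude bound $A_1A_2\le\delta^{-2}$ is useless there. The point is that this blow-up is exactly compensated by the decay of $A_1A_2$: using $1-t\asymp\min(1,E)$ together with $A_1A_2\le C\delta^{-2}\min(1,E^2)$ one finds $|\Theta^2\phi(t)|\,A_1A_2/\phi(t)\lesssim E^{4a-1}$ near $t=1$, which is bounded since $a>1/4$; near $t=0$ one instead uses that $\Theta^2\phi/\phi\to a^2$ while $A_1A_2\le\delta^{-2}$. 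Matching these two regimes — the hypergeometric asymptotics of $\phi$ at $t=1$ against the Harnack decay $A_1A_2\sim E^2$ — is the crux. Finally, since every constant above depends only on the flat-boundary geometry and was rendered scale-free by the reduction to $\delta=1$, the estimates hold with a constant locally uniform in $x_1,y_1,x_2,y_2$, which is the last assertion.
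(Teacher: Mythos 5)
Your argument is sound and reaches the lemma by a partly different route, so here is the comparison. For the first bullet the paper is complex-analytic: it takes a conformal map $g:D\to\Half$ with $g(0)=0$, $g(1)=1$, $g'(0)=1$, writes $H_D(x,y)=g'(x)\,g'(y)/[g(y)-g(x)]^{2}$, extends $g$ by Schwarz reflection to $\delta$-disks about $0$ and $1$, and bounds $g''$, $g'''$ by Koebe distortion ($|a_2|\le 2$, $|a_3|\le 3$); differentiating the explicit formula then yields all the derivative bounds, the mixed one included, in one stroke. Your real-variable route (odd reflection of the Poisson kernel, interior derivative estimates, flat-boundary boundary Harnack comparison with $\Im(z-x_1)$) is a genuine alternative that avoids univalent-function theory, but note that for $\p_{x_1}\p_{y_1}$ the ``single-variable estimate in the $y_1$-slot'' you invoke at an interior point $z$ is not literally the estimate you proved (that one was boundary-to-boundary): you must rerun the reflection/BHP argument on the positive harmonic function $G_D(z,\cdot)$, whose normal derivative at $y_1$ is $P_D(z,y_1)$ --- same tools, but it needs saying. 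For the second bullet your mechanism coincides with the paper's: chain rule on $\phi(e^{-E})$, the Harnack bounds $A_i\le C\delta^{-1}E$, and the hypergeometric asymptotics at $t=1$, where the $(1-t)^{4a-3}$ blow-up of $\phi''$ is absorbed by the $E^{2}$ decay of $A_1A_2$; the paper encodes exactly this as boundedness of $(1-t)\phi'(t)/\phi(t)$ and $(1-t)^{2}\phi''(t)/\phi(t)$ combined with $Ee^{-E}\le C\,(1-e^{-E})$. What you genuinely add is the domain-monotonicity bounds $A_i\le\delta^{-1}$, $B\le\delta^{-2}$ to cover $t$ near $0$ (i.e.\ $E$ large); the paper proves its $\phi$-estimates only on $t\ge x_0>0$, which suffices because $E\le\exc_\Half(\x,\y)<\infty$ for a fixed configuration, and that restriction is precisely why its constant is only locally uniform in the points (the lemma's ``Moreover'' clause), whereas yours is uniform over all configurations --- a small strengthening. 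Two caveats: your global bound $|\Theta\phi|\le C\phi$ up to $t=1$ requires $4a-1>1$, i.e.\ $\kappa<4$ (at $\kappa=4$ it survives because $\phi(t)=\sqrt t$ exactly; for $4<\kappa<8$ one must fall back on the paper's weaker $(1-t)|\phi'|\le C\phi$ and then use the Harnack bound $B\le C\delta^{-2}E$ in place of $B\le\delta^{-2}$), and at exceptional parameters where $4a-1$ is an integer (e.g.\ $\kappa=8/3$) the expansion at $t=1$ carries logarithmic factors, so your displayed bound on $\Theta^{2}\phi$ should read $(1-t)^{-(3-4a)_+}|\log(1-t)|$; this is harmless since $E^{4a-1}|\log E|\to 0$.
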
 

We will also need to show that expectations do not blow
up when paths get close to starting points.  We 
prove this lemma in Section \ref{japansec}.
Let
\[   \Delta_{j,k}(\bgamma) =
 \dist\left\{
    \{x_k,y_k\}, \gamma^j 
    \right\},\]
 \[ \Delta (\bgamma) = \min_{j \neq k}
      \Delta_{j,k}(\bgamma).\]

\begin{lemma}  If $\kappa < 4$, then
for every $n$ and every $(\x,\y)$, there exists
$c < \infty$ such that for all $\epsilon > 0$, and all $j \neq k$,
\[    \E\left[  Y;  \Delta \leq \epsilon
\right] \leq c \, \epsilon^{\frac{12}{\kappa}-1}. \]
In particular,
\[    \E\left[Y \, \Delta^{-2} \right] 
 \leq \sum_{m=-\infty}^\infty 2^{-2m} \, \E\left [Y;
   2^{m} \leq \Delta < 2^{m+1} \right]< \infty . \]
\end{lemma}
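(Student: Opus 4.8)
The plan is to remove the weight $Y$ first and reduce to an ordinary probability estimate for the multiple $\sle$ measure. Since $Y$ is (up to the deterministic factor $\prod_j H(x_j,y_j)^b$) the Radon--Nikodym derivative of $\mu_\Half(\x,\y)$ with respect to $\prod_j \mu^\#(x_j,y_j)$, for every event $A$ one has the identity
\[ \E[Y; A] = \tilde \Psi_\Half(\x,\y)\,\mu^\#_\Half(\x,\y)[A] \le \mu^\#_\Half(\x,\y)[A], \]
where $\mu^\#_\Half(\x,\y)=\mu_\Half(\x,\y)/\Psi_\Half(\x,\y)$ is the multiple $\sle$ probability measure and I used $\tilde\Psi\le 1$. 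Taking $A=\{\Delta\le\epsilon\}$ and summing over the finitely many pairs $j\neq k$ and the two endpoints of each pair, it suffices to bound, for $p\in\{x_k,y_k\}$ with $k\neq j$,
\[ \mu^\#_\Half(\x,\y)\big[\dist(\gamma^j,p)\le\epsilon\big]\le c\,\epsilon^{\frac{12}{\kappa}-1}. \]

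Next I would pass to a single curve. Conditioned on $\bgamma^{-j}=(\gamma^i)_{i\neq j}$, the marginal law of $\gamma^j$ is $\sle$ from $x_j$ to $y_j$ in the connected component $\tilde D$ of $\Half\setminus\bgamma^{-j}$ whose boundary carries $x_j,y_j$; this is exactly the recursive description of $\mu_\Half(\x,\y)$ together with the freedom to take the marginals in any order. So it is enough to bound, uniformly over the random domain $\tilde D$, the probability that a single $\sle$ curve comes within Euclidean distance $\epsilon$ of the marked boundary point $p$, which is the landing point of $\gamma^k$ on $\partial\tilde D$. I would parametrize $\gamma^j$ by half-plane capacity, run the Loewner flow $g_t$, and track the force-point process $g_t(p)-U_t$ together with the derivative $g_t'(p)$; following the techniques of \cite{Japan}, the Euclidean distance $\dist(\gamma^j_t,p)$ is comparable to these conformal quantities, and $\dist(\gamma^j,p)=\inf_t\dist(\gamma^j_t,p)$ because this distance is nonincreasing in $t$, so that $\{\dist(\gamma^j,p)\le\epsilon\}=\{\tau_\epsilon<\infty\}$ for $\tau_\epsilon=\inf\{t:\dist(\gamma^j_t,p)\le\epsilon\}$.

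The estimate would then come from analyzing the force-point process, which is a Bessel-type diffusion, and applying optional stopping at $\tau_\epsilon$. The crucial feature is that $p$ behaves as a \emph{repelling} marked point: the non-crossing indicator $I(\bgamma)$ and the vanishing of the relevant boundary Poisson kernel as $\gamma^j$ approaches $p$ suppress the approach, enhancing the plain $\sle$ boundary-visiting exponent $\frac{8}{\kappa}-1$ to $\frac{12}{\kappa}-1$. Making this precise is the main obstacle and the technical heart of the argument: one must pin down the comparison between Euclidean distance and the Loewner derivative quantities, control the Bessel process uniformly over the random conditioned domains $\tilde D$ (including the corner-type geometry of $\partial\tilde D$ at $p$ coming from $\gamma^k$ and the fluctuations of $\gamma^k$ near its own endpoint), and, when $\cent>0$ so that $Y>1$ is possible, verify that the Brownian-loop factor does not spoil the bound.

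Finally, the \emph{in particular} statement follows by dyadic summation. On $\{2^m\le\Delta<2^{m+1}\}$ we have $\Delta^{-2}\le 2^{-2m}$, and applying the first bound with $\epsilon=2^{m+1}$ gives
\[ \E[Y\,\Delta^{-2}] \le \sum_m 2^{-2m}\,\E[Y;\Delta<2^{m+1}] \le c'\sum_m 2^{m\left(\frac{12}{\kappa}-3\right)}. \]
The negative-$m$ tail converges precisely because $\frac{12}{\kappa}-3>0$, that is $\kappa<4$, which is exactly where the hypothesis enters; the positive-$m$ tail is a finite sum since $\Delta$ is bounded above on the support and $\E[Y]\le 1$.
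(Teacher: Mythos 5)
Your opening and closing steps are sound: the identity $\E[Y;A]=\tilde\Psi(\x,\y)\,\mu^\#_\Half(\x,\y)[A]$ together with $\tilde\Psi\le 1$ is correct and is an equivalent reformulation of the paper's own reduction (the paper instead conditions on the two relevant curves and uses $\E[Y\mid\gamma^j,\gamma^k]\le Y(\gamma^j,\gamma^k)$ to reduce to $n=2$), and the dyadic summation at the end is fine. The gap is in the middle, and it is twofold. First, the step ``it suffices to bound, uniformly over the random domain $\tilde D$, the probability that a single $SLE_\kappa$ curve comes within $\epsilon$ of $p$'' by $c\,\epsilon^{12/\kappa-1}$ is not a valid reduction, because that uniform bound fails. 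Conditionally on $\bgamma^{-j}$, the curve $\gamma^j$ is \emph{plain} chordal $SLE_\kappa$ in $\tilde D$, with no tilting whatsoever; its probability of approaching the boundary point $p$ is governed by the local conformal geometry of $\partial\tilde D$ at $p$, i.e.\ by how $\gamma^k$ leaves $p$. On configurations where $\gamma^k$ leaves $x_k$ hugging the real line, so that the opening of $\tilde D$ at $p$ is nearly a straight angle, the conditional hitting probability decays like the flat-boundary exponent $\epsilon^{8/\kappa-1}$, not $\epsilon^{12/\kappa-1}$, and no deterministic constant can repair this. The ``repelling'' effect you invoke is real, but it does not sit in the conditional law of $\gamma^j$ given the other curves; it sits in the tilted marginal law of the \emph{other} curves. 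Hence the improvement from $8/\kappa-1$ to $12/\kappa-1$ is intrinsically an averaged statement over $\tilde D$, and your conditioning goes in the unprofitable direction: to recover the lemma you would have to control $\E_{\mu^\#}$ of the random conditional probability, which is exactly the original problem.

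Second, even granting the strategy, the quantitative core is absent: you explicitly defer ``the main obstacle and the technical heart.'' The paper's proof consists precisely of supplying that estimate. It conditions the opposite way (for $n=2$, on $\gamma^1$ alone), so that the weight $\E[Y\mid\gamma^1]=\Phi^b$ with $\Phi=H_{\Half\setminus\gamma^1}(x_2,y_2)/H_\Half(x_2,y_2)$ is an explicit functional of a single plain $SLE_\kappa$ in $\Half$, and then proves $\E[\Phi^b;\dist(\{x,y\},\gamma)<\epsilon]\le c\,\epsilon^{6a-1}$ (with $a=2/\kappa$, so $6a-1=12/\kappa-1$) by the Section 3 machinery: reflection to make the target point interior and comparison of distance with conformal radius, the Girsanov tiltings by $M_t^*$ and $N_t$, the time change under which $\log\Upsilon_t$ decays linearly, and the ergodicity of the radial Bessel process $\Theta_t$. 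Without this computation, or a genuinely different proof of the averaged bound, the proposal is an outline of intent rather than a proof; note also that the weight cannot simply be discarded at the start and reinstated later, since for $8/3<\kappa<4$ one has $\cent>0$ and $Y>1$ on the non-intersection event, so the bound really must exploit the smallness of the Poisson-kernel ratio on the near-approach event rather than treat $Y$ as a nuisance factor bounded by $1$.
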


\begin{proof}  It suffices to show that for each $j,k$,
\[    \E\left[  Y;  \Delta_{j,k} \leq \epsilon\right] \leq c \, 
\epsilon^{\frac{12}{\kappa}-1}, \]
and by symmetry we may assume $j=1,k=2$.  If
we write $\bgamma = (\gamma^1,\gamma^2, \bgamma')$, then
the event  $\{\Delta_{1,2} \leq \epsilon\}$ is measurable
with respect to $(\gamma^1,\gamma^2)$ and 
\[   \E[Y \mid  \gamma^1, \gamma^2] \leq  Y(\gamma^1,\gamma^2).\]
Hence it suffices to prove the result when $n=2$.  This 
will be done in Section \ref{japansec}; in that section
we consider $\kappa < 8$.
\end{proof}

   For $n=1,2$,
it is clear that $\tilde \Psi$ is $C^\infty$ from 
  the exact expression, so we
will assume that $n \geq 3$.  By invariance under
permutation of indices, it suffices to consider
second order derivatives involving only $x_1,x_2,y_1,y_2$.
We will assume $x_j < y_j$ for  $j = 1,2$ 
and $x_1 < x_2$ (otherwise we just
relabel the vertices).  The configuration $x_1 < x_2 < y_1 < y_2$
is impossible for topological reasons.  If $x_1 < x_2 < y_2 < y_1$,
we can find a M\'obius transformation taking a point $y' \in (y_2,y_1)$
to $\infty$ and then the images would satisfy
$y_1' < x_1' < x_2'  < y_2'$ and this reduces to above.  So
we may assume that
\[    x_1 < y_1 < x_2 < y_2.\]

\noindent {\bf Case I:} Derivatives involving only $x_j,y_j$
for some $j$.

\medskip

We assume $j=1$. We will write
$\x = (x,\x'), y = (y,\y'), \bgamma = (\gamma^1,\bgamma')$, and 
let $D$ be the connected component
of $\Half \setminus \bgamma'$ containing $x,y$
on the boundary.  Then
\[      \E[ Y \mid \bgamma']
      =   Y(\bgamma') \, \left[\frac{H_{D}(x,y)}
     {
         H(x,y)}\right]^b 
   =  Y(\bgamma') \, Q_D(x,y)^b,\]
   where
  $Q_D(x,y)$  is the probability that a (Brownian)
   excursion in $\Half$
 from $x$ to $y$ stays in $D$.  
Hence
\[   \tilde \Psi(\x,\y) =
    \E  \left[ Y(\bgamma') \, Q_D(x,y)^b \right]
         .\]
Let $\delta = \delta(\bgamma') = \dist\{\{x,y\},\bgamma'\}.$
  Using  Lemma \ref{mar19.lemma1},  we see that
 \[    \left|\p_{x} [Q_D(x,y)^b] \right|
   \leq c \, \delta^{-1} \, Q_D(x_1,y_1)^b , \]  
   \[   \left|\p_{xy} [Q_D(x,y)^b]\right|
      +  \left|\p_{xx} [Q_D(x,y)^b]\right|
   \leq c \, \delta^{-2} \, Q_D(x_1,y_1)^b.\]
(Here  $c$ may depend on $x,y$ but not on $D$).
Hence
\[     \E\left[ Y(\bgamma') \,  \left |\p_x [Q_D(x,y)^b ]
 \right| \right] \leq c\,\E\left[ Y(\bgamma') \,
          \delta(\bgamma')^{-1}\, Q_D(x,y)^b\right] ,\]
         and if $z $ = $x$ or $y$,
 \[     \E\left[ Y(\bgamma') \,  \left |\p_{xz} [Q_D(x,y)^b ]
 \right| \right] \leq  c\,\E\left[ Y(\bgamma') \,
          \delta(\bgamma')^{-2}\, Q_D(x,y)^b\right] .\]
Since
   \[ 
\E\left[ Y(\bgamma') \,
          \delta(\bgamma')^{-2}\, Q_D(x,y)^b\right] =
        \E\left[\E\left(Y \, \delta^{-2} \mid \bgamma'\right)\right]
      = \E[Y\, \delta^{-2}]  \leq \E[Y \, \Delta^{-2} ] < \infty,  \]
  the interchange of expectation and derivative is valid,
  \[      \p_x \tilde \Psi (\x,y) =
     \E\left[Y(\bgamma') \, \p_x[Q_D(x,y)^b] \right], \;\;\;\;
   \p_{xz} 
\tilde \Psi (\x,y) =
     \E\left[Y(\bgamma') \, \p_{xz}[Q_D(x,y)^b] \right]. \]
\medskip

\noindent {\bf Case 2:}  The partial $\p_{z_1z_2}$
where $z_1 \in \{x_j,y_j\}, z_2 \in \{x_k,y_k\}$
with $j\neq k$.

\medskip

We assume $j=1,k=2$.
 We will write
$\x = (x_1,x_2,\x'), y = (y_1,y_2,\y'), \bgamma = (\gamma^1,\gamma^2,
\bgamma')$.   We will write $D' = D \setminus \bgamma'$
and let $D_1,D_2$ be the connected components of $D'$ containing
$\{x_1,y_1\}$ and $\{x_2,y_2\}$ on the boundary.  It is possible
that $D_1 = D_2$ or $D_1 \neq D_2$.
\begin{itemize}
\item  If $D_1 \neq D_2$, then
\[     \E[  Y \mid \bgamma']
      =   Y(\bgamma') \, Q_{D_1}(x_1,y_1)
      ^b \, Q_{D_2}(x_2,y_2)
      ^b 
       .\]
\item  If $D_1 = D_2=D$, then
\[   \E[  Y \mid \bgamma']
      =  Y(\bgamma') \, Q_{D_1}(x_1,y_1)
      ^b \, Q_{D_2}(x_2,y_2)
      ^b  \,\tilde \Psi_D((x_1,x_2),
       (y_1,y_2)), \]
 where $\tilde \Psi_D$ is defined as in \eqref{pat.1}.

\end{itemize} 

In either case we have written
\[  \E[  Y \mid \bgamma']
      =   Y(\bgamma') \, \Phi(\z;\bgamma'),\]
 where $\z = (x_1,y_1,x_2,y_2)$ and  we can use
 Lemma \ref{mar19.lemma1} to see that
 \[   \left| \p_{z_1z_2}\Phi(\z;\bgamma')
 \right| \leq c \, \Delta(\bgamma,\z)^{-2} \, \Phi(\z,
 \bgamma'), \;\;\;\; \Delta(\bgamma,\z)
  = \dist\{\gamma,\{x_1,y_1,x_2,y_2\}\}.\]
  As in the previous case, we can now interchange
  the derivatives and the expectation.

\section{Estimate}   \label{japansec}

In this section we will derive an estimate for $SLE_\kappa, \kappa < 8$.
While the estimate is valid for all $\kappa < 8$, the result is only
strong enough to prove our main result for $\kappa < 4.$
We follow the ideas in \cite{Japan} where careful analysis was made
of the boundary exponent for $SLE$.
Let $g_t$ denote the usual conformal transformation associated to the  $SLE_\kappa$ path $\gamma$ from $0$ to $\infty$
 parametrized so that
 \begin{equation}  \label{mar18.1}
     \p_t g_t(z) = \frac{a}{g_t(z) - U_t} , 
     \end{equation}
 where $a = 2/\kappa$ and  $U_t = - W_t$ is a standard Brownian motion.
 Throughout, we assume that $\kappa < 8$, so that
 $D= D_\infty = \Half \setminus \gamma$ is a nonempty
 set. If $0 < x < y < \infty,$ we let
 \[  \Phi = \Phi(x,y) = \frac{H_D(x,y)}
   {H_\Half(x,y)}, \]
 where $H$ denotes the boundary Poisson kernel.  If
 $x$ and $y$ are on the boundary of different components
 of $D$ (which can only happen for $4 < \kappa < 8$), then
 $H_D(x,y) = 0$.   As usual, we let
 \[   b = \frac{6-\kappa}{2\kappa} = \frac{3a-1}{2}.\]
As a slight abuse of notation, we will write $\Phi^b$ for
$\Phi^b \, 1\{\Phi > 0\}$ even if $b \leq 0$. 
 
\begin{proposition}  For every $\kappa  < 8 $ and $\delta > 0$,
there exists $ 0 < c < \infty$ such that for all
$\delta \leq x < y \leq 1/\delta$ and all $0 < \epsilon
<  (y-x)/10 $,
\[
   \E\left[\Phi^b;   \dist(\{x,y\},
   \gamma) < \epsilon \right]  \leq c \, \epsilon^{6a-1}.\]
 \end{proposition}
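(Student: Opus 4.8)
The plan is to reduce the weighted estimate to an escape probability for a diffusion, via the domain Markov property and a change of measure, and then to read off the exponent $6a-1$ from a single drift computation. Throughout write $X_t = g_t(x) - U_t$ and $Y_t = g_t(y)-U_t$, and $R_t = X_t/Y_t \in (0,1)$.

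First I would record the boundary Poisson kernel martingale. Conditionally on $\F_t$ the remaining curve is a fresh $\sle$ from $0$ to $\infty$, so by conformal covariance of $H$ and scale invariance of the cross ratio one gets $\E[\Phi_t^b\mid\F_t]=\phi(R_t)$ with $\phi$ as in \eqref{mar18.4}, whence
\[ M_t := \E[\Phi^b \mid \F_t] = \left[g_t'(x)\,g_t'(y)\right]^b \left(\frac{y-x}{Y_t - X_t}\right)^{2b}\phi(R_t). \]
Since $M_t$ is a conditional expectation it is automatically a martingale, so its $dt$ part vanishes and I only need its volatility. Using $d\log g_t'(\cdot)=-a(\cdot)^{-2}\,dt$ together with the Loewner SDE's for $X_t,Y_t$, the only Brownian contribution comes from $\phi(R_t)$, giving $dM_t/M_t = \sigma_t\,dW_t$ with $\sigma_t = \frac{\phi'(R_t)}{\phi(R_t)}\,\frac{Y_t-X_t}{Y_t^2}$.

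Next I would change measure. As $M_t\ge 0$ is a martingale closed by $\Phi^b=M_\infty$ with $M_0=\phi(x/y)$, the tilted law $d\Prob^\ast = (M_\infty/M_0)\,d\Prob$ satisfies $\E[\Phi^b;\,T<\infty]=\phi(x/y)\,\Prob^\ast(T<\infty)$ for any stopping time $T$; since $x/y\in(\delta^2,1)$ we have $\phi(x/y)\le c_\delta$, so it suffices to bound $\Prob^\ast(\dist(\{x,y\},\gamma)<\epsilon)$. By Girsanov, under $\Prob^\ast$ the process $\tilde W_t = W_t-\int_0^t\sigma_s\,ds$ is a Brownian motion, so $X_t,Y_t$ pick up the extra drift $\sigma_t$. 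I would then track the distance through the boundary distortion estimate $\dist(x,\gamma_t)\asymp \Upsilon^x_t := X_t/g_t'(x)$ (and symmetrically for $y$), which is where I import the Koebe-type estimates of \cite{Japan}. A computation shows that, after the time change $du = dt/X_t^2$, the process $\log\Upsilon^x$ is a Brownian motion with drift $2a-\tfrac12 + R(1-R)\tfrac{\phi'(R)}{\phi(R)}$. As $R\to0$ one has $\phi(R)\asymp R^a$, so $\phi'/\phi\sim a/R$ and the tilt term tends to $a$, making the drift tend to $3a-\tfrac12$; consequently $(\Upsilon^x_t)^{1-6a}$ is (locally) a nonnegative supermartingale and optional stopping at the first time $\dist(x,\gamma_t)=\epsilon$ yields $\Prob^\ast(\dist(x,\gamma)\le\epsilon)\le c\,(\epsilon/x)^{6a-1}\le c\,\delta^{-(6a-1)}\,\epsilon^{6a-1}$. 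The analogous estimate near $y$ and a union bound would then finish the proof.

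The crux — and the step I expect to be the main obstacle — is making this drift control rigorous and uniform. The clean exponent $6a-1$ comes precisely from the limit $R(1-R)\phi'(R)/\phi(R)\to a$ as the curve approaches a marked point, which rests on the geometric fact that an $\epsilon$-approach to $x$ forces $R_t=X_t/Y_t\to0$ (the approached point, with the base region, is pinched into a narrow fjord at the tip, so $X_t\to0$ while $Y_t$ stays of order one, guaranteed here since $\epsilon<(y-x)/10$). Away from this regime the tilt term is strictly smaller — indeed $R(1-R)\phi'(R)/\phi(R)\to0$ as $R\to1$, i.e. when the two images nearly coincide — so $(\Upsilon^x)^{1-6a}$ need not be a supermartingale throughout and the naive escape bound would degrade to the unweighted exponent $4a-1$. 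Controlling the regime where $R$ is bounded away from $0$ (equivalently, where $x$ and $y$ have comparable images), together with establishing the boundary comparison $\dist\asymp\Upsilon$ uniformly over the evolving domain, is the delicate analytic heart of the argument, and is exactly what the careful boundary-exponent analysis of \cite{Japan} is intended to supply.
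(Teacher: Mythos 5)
Your setup is sound, and two of your computations are genuinely correct: the closed martingale $M_t=\E[\Phi^b\mid \F_t]=\Phi_t^b\,\phi(R_t)$ with volatility $\sigma_t=\frac{\phi'(R_t)}{\phi(R_t)}\frac{Y_t-X_t}{Y_t^2}$, the exact Girsanov identity $\E[\Phi^b;A]=\phi(x/y)\,\Prob^*(A)$, and the time-changed drift $2a-\tfrac12+R(1-R)\phi'(R)/\phi(R)$ for $\log\Upsilon^x$ all check out, and the numerology ($2a-\tfrac12$ versus $3a-\tfrac12$, i.e.\ $4a-1$ versus $6a-1$) matches the paper's. The fatal problem is the geometric premise you rest the exponent on: that an $\epsilon$-approach to a marked point forces $R_t\to 0$. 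This is false precisely where your union bound needs it, namely for the approach to the \emph{far} point $y$. For a simple curve the prime-end arc from the tip to the approached point is exactly the set of prime ends cut off from $\infty$ by the $\epsilon$-gap; when the approached point is $y$ this arc is the whole right side of the curve together with $[0,y)$, so a crosscut estimate gives $Y_t\lesssim \epsilon$ \emph{and} $Y_t-X_t\lesssim\epsilon$ simultaneously, and their ratio is governed by the macroscopic geometry inside the fjord: $R_t$ stays bounded away from $0$ (and can be near $1$). Then the tilt term does not tend to $a$ — for the $\Upsilon^y$ version the relevant term is $(1-R)\phi'(R)/\phi(R)$, which tends to $0$ as $R\to1$ — the drift degrades toward $2a-\tfrac12$, the supermartingale $(\Upsilon^y)^{1-6a}$ fails on a region the conditioned process actually occupies, and you only recover $\epsilon^{4a-1}$. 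Your parenthetical justification ("guaranteed since $\epsilon<(y-x)/10$") does not repair this: that hypothesis keeps the two approaches distinct but does not prevent the fjord created by a $y$-approach from containing both marked points. Indeed the paper's own analysis shows the opposite of your premise: under the appropriately tilted law the ratio $\hat K_t$ is \emph{not} pinned at $0$ but equilibrates to the nondegenerate density $\propto k^{4a-1}(1-k)^{4a-1}$ on $(0,1)$.

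This is also not the gap that \cite{Japan} fills — neither \cite{Japan} nor this paper proves a pointwise drift statement of the kind you need, so deferring to it does not close the argument. The paper's proof is structured to avoid the issue entirely: reversibility and scaling reduce the two-point event to the approach to the single far point; Euclidean distance is replaced by the conformal radius of that point in the reflected domain $D_t^*$, so Koebe gives a clean two-sided comparison with no case analysis on how the curve approaches; after the time change $\Upsilon_{\sigma(t)}=e^{-at}$, the conditional weight is bounded by $c\,\hat K_t^a\hat\Phi_t^b$, and the exponent is extracted not from a pointwise drift but from the exact local martingale $N_t=e^{2a^2t}\,\hat\Phi_t^b\,\hat K_t^a$ — a Feynman--Kac/eigenfunction identity. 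Tilting first by $M_t^*$ (contributing $\epsilon^{4a-1}$) and then by $N_t$ (whose compensator $\lambda=2a^2$ contributes the extra $\epsilon^{2a}$) leaves only the uniform boundedness of $\tilde\E\left[(1-\hat K_t)^{1-4a}\right]$, which follows from ergodicity of the radial Bessel process. In short, $6a-1$ arises in the paper as an eigenvalue, i.e.\ an average of your tilt term over the stationary law of $\hat K$, and your Lyapunov-function argument is the pointwise shadow of that computation; since the pointwise inequality is simply not true, some form of this averaging (the $N_t$ martingale together with the Bessel ergodicity, or an equivalent device) is unavoidable.
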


  It is already known that
  \[ 
   \Prob\left\{  \dist(\{x,y\},
   \gamma) < \epsilon  \right\}  \asymp  \epsilon^{4a-1},\]
 and hence we can view this as the estimate 
 \[   \E\left[\Phi^b  \mid  \dist(\{x,y\},
   \gamma) < \epsilon \right] \leq c \, \epsilon^{2a}.\]
  Using reversibility \cite{MS,Zhan}
   and scaling  of $SLE_\kappa$ we can see that
 to prove the proposition it suffices to show that for every $\delta >0$
 there exists $c = c_\delta$ such that if $\delta \leq x < 1$,
  \[  
   \E\left[\Phi^b;  \dist(1,
   \gamma) < \epsilon \right]  \leq c \, \epsilon^{6a-1}.\]
 This is the result we will prove. 
 
 \begin{proposition}  If $\kappa < 8$, there exists $c < \infty$
 such that if $\gamma$ is an $SLE_\kappa$ curve from $0$ to $\infty$,
 $0 < x  < 1$, $\Phi = \Phi(x,1)$, $0 < \epsilon \leq 1/2$,
 \[     \E\left[\Phi^b ; \dist(\gamma,1) < \epsilon\, (1-x) \right]
    \leq c \,x^a \, (1-x)^{4a-1}\, \epsilon^{6a - 1}.\]
  \end{proposition}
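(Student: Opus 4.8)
The goal is to estimate $\E[\Phi^b;\dist(\gamma,1)<\epsilon(1-x)]$ where $\Phi=\Phi(x,1)=H_D(x,1)/H_\Half(x,1)$ is the probability that a Brownian excursion from $x$ to $1$ stays in $D=\Half\setminus\gamma$. Let me think about the structure here.

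The quantity $\Phi^b$ is a martingale-type object (related to the SLE partition function / boundary perturbation), and $\dist(\gamma,1)<\epsilon(1-x)$ is an event about the curve approaching the marked point $1$. The exponent $6a-1$ should decompose as the known $4a-1$ from the probability of approaching $1$, times $x^a$ from approaching $x$ (or the excursion surviving), times $2a$ which is the extra gain from the $\Phi^b$ weighting.

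Let me think about the Loewner flow approach following [Japan].

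**The approach.**

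The plan is to follow the Girsanov/martingale framework of [Japan], tracking the SLE curve via the Loewner flow $g_t$ and studying the evolution of the relevant conformal quantities at the two marked points $x$ and $1$ simultaneously. I would introduce the images $X_t = g_t(x)$ and $Z_t = g_t(1)$ of the two boundary points under the flow, together with the derivatives $g_t'(x)$ and $g_t'(1)$, and express $\Phi(x,1)$ in terms of the conformal geometry of the slit domain. The key structural fact, going back to the restriction-property computation in the single-curve example of Section~\ref{exmaples}, is that a suitable power of a Poisson-kernel ratio is a local martingale under the SLE measure; concretely I would look for a martingale of the form
\[
M_t = g_t'(x)^{\alpha}\,g_t'(1)^{\beta}\,(Z_t-X_t)^{\gamma}\,F(\Theta_t),
\]
where $\Theta_t$ is the ``shape'' coordinate $\Theta_t = (U_t - X_t)/(Z_t - X_t)$ recording the relative position of the driving point between the two images, and $F$ solves the ODE forced by the generator of $\Theta_t$. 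Matching the drift to zero in It\^o's formula fixes the exponents $\alpha,\beta,\gamma$ in terms of $a,b$, and $F$ becomes a hypergeometric-type function with prescribed boundary behavior as $\Theta\to 0,1$.

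**Using the martingale to change measure.**

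Once $M_t$ is identified as a positive local martingale, I would use it to define a tilted (Girsanov) measure under which the driving function $U_t$ acquires a drift pushing $\Theta_t$ toward the boundary configuration that corresponds to the curve hitting a neighborhood of $1$. Under this weighted measure the process $\Theta_t$ becomes a diffusion whose boundary exponents at $\Theta = 0$ and $\Theta = 1$ are exactly $a$ and $4a-1$ (the latter being the known one-point exponent $\Prob\{\dist(1,\gamma)<\epsilon\}\asymp\epsilon^{4a-1}$ quoted before the proposition). The event $\{\dist(\gamma,1)<\epsilon(1-x)\}$ is then controlled by the probability that this diffusion reaches the relevant boundary scale, which by standard one-dimensional diffusion estimates (comparison with a Bessel-type process, hitting-time / overshoot bounds) contributes the factor $\epsilon^{6a-1}$ after accounting for the extra weight carried by $M_t$. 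The prefactors $x^a(1-x)^{4a-1}$ arise from evaluating the martingale $M_0$ at the initial configuration $(X_0,Z_0,U_0)=(x,1,0)$, i.e.\ from $g_0'=1$ and the explicit dependence of $F(\Theta_0)$ on $\Theta_0 = x$.

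**The main obstacle.**

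I expect the hardest step to be the passage from the local martingale to rigorous quantitative bounds on the boundary behavior of the diffusion $\Theta_t$, and in particular making the estimate uniform down to the scale $\epsilon(1-x)$ while keeping the $\delta$-dependence of the constant honest. The subtleties are: (i) verifying that $M_t$ is a genuine martingale (not merely local) up to the relevant stopping time, which requires controlling $\Phi^b$ when $\Phi$ degenerates as $\gamma$ approaches the boundary, exactly the regime $4<\kappa<8$ where $b$ can be negative and $\Phi$ may vanish; (ii) showing the weighted hitting probability has the clean power law $\epsilon^{6a-1}$ rather than merely an upper bound with logarithmic corrections, which forces a careful analysis of the scale function and speed measure of $\Theta_t$ near its singular endpoints. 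Handling the weight $\Phi^b$ uniformly across $\kappa<8$---rather than just $\kappa<4$ where the final theorem is applied---is where the technical care of the [Japan]-style estimates is really needed.
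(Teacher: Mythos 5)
Your overall framework---Loewner flow, a local martingale of product form, Girsanov, and reduction to a one-dimensional diffusion, following \cite{Japan}---is the same family of techniques the paper uses, and your guess for the shape of the martingale (powers of $g_t'(x)$, $g_t'(1)$, and of the difference of images, times a hypergeometric-type function of a cross-ratio coordinate) is structurally right. But there are two genuine gaps. First, you treat $\{\dist(\gamma,1)<\epsilon(1-x)\}$ as a hitting event for the shape diffusion $\Theta_t$ ``reaching the relevant boundary scale,'' and you claim the factor $\epsilon^{6a-1}$ comes from a hitting-probability estimate. That is not how the event can be captured, and in fact under the final tilted measure the shape process never degenerates: the paper reflects $\gamma_t$ across $\R$, adjoins $(-\infty,x]$, so that $1$ becomes an \emph{interior} point of the unbounded component $D_t^*$, and uses the Koebe argument to replace distance by the conformal radius $\Upsilon_t$ of $1$ in $D_t^*$; it then reparametrizes time by $\sigma(t)=\inf\{s:\Upsilon_s=e^{-at}\}$, so that (since $\Upsilon_t$ is decreasing) the event becomes $\{\sigma(t)<\infty\}$ with $\epsilon=e^{-at}$. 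The power $\epsilon^{6a-1}$ then falls out of the \emph{deterministic} exponential prefactors $e^{-\lambda t}e^{(1-4a)at}$ relating the tilted martingale to $\hat K_t^a\hat\Phi_t^b$, not from any boundary-hitting estimate; the only probabilistic input at the end is that $\tilde\E[(1-\hat K_t)^{1-4a}]$ stays bounded, which follows because $\hat K_t=[1-\cos\Theta_t]/2$ makes $\Theta_t$ a radial Bessel process, positive recurrent with invariant density proportional to $\sin^{8a-1}\theta$. Without the reflection/conformal-radius step and the time change, your plan has no way to express the geometric event in terms of the flow, and no mechanism producing a clean power of $\epsilon$.

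Second, $\Phi^b=\Phi_\infty^b$ is a functional of the \emph{entire} curve, while your event and your martingale live at an intermediate (stopping) time; your sketch never says how the terminal weight is reduced to the stopped configuration. The paper does this with the key inequality $\E[\Phi_\infty^b\mid\gamma_{\sigma(t)}]\le c\,\hat K_t^a\,\hat\Phi_t^b$, which is exactly where the explicit $n=2$ hypergeometric formula \eqref{mar18.4} enters (via its boundedness on $[0,1]$ for $\kappa<8$); the hypergeometric function is used to collapse the conditional expectation, not merely as an ingredient inside the martingale. Your concern (i) about the genuine-martingale property is legitimate and is handled in the paper by a second Girsanov argument and comparison with a Bessel process, but as written your proposal is missing the two ideas above, and the mechanism you assign to the exponent $6a-1$ is the wrong one.
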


 We will relate the distance to the curve to a conformal radius.  In
 order to do this, we will need $1$ to be an interior point of the
 domain.   Let
 $D_t^*$ be the unbounded component of  
 \[   K_t = \Half \setminus \left[(-\infty,x] \cup \gamma_t \cup \{\bar z: z \in \gamma_t\}\right] ,\]
 and let 
 $T = T_1 = \inf\{t: 1 \not\in D^*_t\}$.  Then for $t < T$, the
 distance from $1$ to $\p D^*_t$ is the minimum of $1-x$ and
 $\dist(1,\gamma_t)$.   In particular, if $t < T$ and
  $\epsilon < 1-x$, then
 $\dist(\gamma_t,1) \leq \epsilon$ if and only if $\dist(1,\p D_t^*)
  < \epsilon$.  We define $\Upsilon_t$
  to  be $[4(1-x)]^{-1}$ times the 
   conformal radius of $1$ with respect to $D^*_t$ and
   $\Upsilon = \Upsilon_\infty$.
   Note that $\Upsilon_0 = 1$, and  if  $\dist(1,\p D^*_t)
  \leq  \epsilon(1-x)$, then $\Upsilon \leq \epsilon$.
   It suffices for us to   show that 
 \[     \E\left[\Phi^b ; \Upsilon < \epsilon \right]
    \leq c \, \epsilon^{6a - 1}.\]

We set up some notation.  We fix $0 < x < 1$ and
assume that $g_t$ satisfies \eqref{mar18.1}.  Let
\[   X_t = g_t(1) - U_t, \;\;\;\; Z_t = g_t (x) - U_t, \;\;\;\;
    Y_t = X_t - Z_t, \;\;\;\; {K_t} = \frac{Z_t}{X_t} , \]
  and note that the scaling rule for conformal radius implies
  that
  \[    \Upsilon_t = \frac{Y_t}{(1-x)\, g_t'(1)}.\]
  The Loewner equation implies that
  \[   dX_t = \frac{a}{X_t} \, dt + dB_t, \;\;\;\; dZ_t
   = \frac{a}{Z_t} \, dt + dB_t, \]
   \[     \p_t g_t'(1) = - \frac{a\, g_t'(1)}{X_t^2}, \;\;\; \p_t g_t'(x)
      = - \frac{a g_t'(x)}{Z_t^2}, \;\;\; \p_tY_t = -\frac{a \, Y_t}
        {X_t Z_t}
           . \]
            \[   \p_t \Upsilon_t = \Upsilon_t \, \,\left[ \frac{a}{X_t^2} - \frac{a}
{X_t Z_t} \right] = -a\Upsilon_t \, \frac{1}{X_t^2} \, \frac{1-K_t}{K_t} . \]
  Let $D_t$ be the unbounded component of $\Half \setminus \gamma_t$
  and let
  \[   \Phi_t = \frac{H_{D_t}(x,1)}{H_{D_0}(x,1)}
    =   x^2 \,\frac{g_t'(x)\, g_t'(1)}{Y_t^{ 2}},\]
  where we set $\Phi_t = 0$ if $x$ is not on the boundary of $D_t$, that is,
  if $x$ has been swallowed by the path (this is relevant only
  for $4 < \kappa < 8$).  Note that $\Phi =   \Phi_\infty$ and
  \[    \p_t \Phi_t^b = \Phi_t^b \, \left[ - \frac{ab }{X_t^2}
   - \frac{ab }{Z_t^2}+\frac{2ab }
        {X_t Z_t}\right]  = - ab \,  \frac{\Phi_t^b}{X_t^2}
         \, \left( \frac{1 - K_t}{K_t}\right)^2,\]
     \[ \Phi_t^b = \exp\left\{ -ab \int_0^t \frac{1}{X_s^2} \, 
          \left( \frac{1 - K_s}{K_s}\right)^2  \, ds \right\}.\]
     It\^o's formula implies
      that
   \[  d \frac{1}{X_t} = -\frac{1}{X_t^2} \, dX_t +
    \frac{1}{X_t^3} \, d\langle X \rangle_t =
      \frac1{X_t}  \, \left[\frac{1-a}{X_t^2} \, dt
          - \frac{1}{X_t} \, dW_t \right], \]
   and the product rule gives
   \[   d[1-K_t] = [1-K_t] \, \left[\frac{1-a}{X_t^2} \, dt -
    \frac a{X_t\,Z_t}  \, dt
          - \frac{1}{X_t} \, dW_t \right]
          = \frac{ 1-K_t }{X_t^2} \, 
           \left[(1-a) -
    \frac a{K_t} \right] \, dt - \frac{1-K_t}{X_t} \, dW_t.\]
   which can be written as
   \[   dK_t = \frac{1-K_t}{X_t^2} \, 
     \left[ 
    \frac a{K_t}  + a-1\right] \, dt + \frac{1-K_t}{X_t} \, dW_t.\]

As in \cite{Japan}, we consider the local martingale
\[  M_t^*=  (1-x)^{1-4a} \, X_t^{1-4a} \, g_t'(1)^{4a-1} = 
 (1-x)^{1-4a} \,  (1-K_t)^{4a-1} \, \Upsilon_t^{1-4a},\]
    which satisfies
\[                 dM_t^* = \frac{1-4a}{X_t} \, M_t^* \, dW_t,
\;\;\;\;M_0^* = 1\]
If we use Girsanov and tilt by the local martingale, we see
that
\[  dK_t =  \frac{1-K_t}{X_t^2} \, 
     \left[ 
    \frac a{K_t} -3a\right] \, dt + \frac{1-K_t}{X_t} \, d  W_t^*.\]
    where $  W_t^*$ is a standard
     Brownian motion in the new measure $\Prob^*$.
We reparametrize so that $\log \Upsilon_t$ decays
linearly.  More precisely, we let $\sigma(t)
 = \inf\{t: \Upsilon_t = e^{-at} \}$ and define
 $\hat X_{t} = X_{\sigma(t)}, \hat Y_t = Y_{\sigma(t)}$,
 etc.  Since  $\hat \Upsilon_t := \Upsilon_{\sigma(t)}
  = e^{-at}$, and 
  \[ - a \, \hat \Upsilon_t = \p_t \, \hat \Upsilon_t = -a \hat \Upsilon_t \,
  \frac 1 {\hat X_t^2}\, \frac{1-\hat K_t}{\hat K_t}
   \,  \dot \sigma(t), \]
we see that
   \[   \dot \sigma(t) = \frac{ \hat X_t^2\,  \hat K_t}{1-\hat K_t}, \]
  Therefore,
  \[  \hat \Phi_t^b :=  \Phi_{\sigma(t)}
  ^b = \exp \left\{-ab \int_0^t
      \frac{1-\hat K_s}{\hat K_s} \, ds \right\}
        = e^{abt} \, \exp \left\{-ab \int_0^t
      \frac{1}{\hat K_s} \, ds \right\}
         ,\]
    \begin{eqnarray*}
      d\hat K_t & = & \left[a - 3a \hat K_t\right]
     \, d t+ \sqrt{\hat K_t \, (1-\hat K_t)}
      \, dB_t^*\\
      & = & \hat K_t \, \left[\left(\frac{a}{\hat K_t}
        - 3a \right) \, dt + \sqrt{\frac{1-\hat K_t}
        {\hat K_t}} \, dB_t^*\right].
      \end{eqnarray*}
  for a standard Brownian motion $B_t^*$ (in the  measure $\Prob^*$).
     
Let $\lambda = 2a^2$, and
\[    N_t = e^{\lambda t} \,   \hat \Phi_t^b  \hat K_t^{a}
  = \exp\left\{\frac{a(7a-1)}{2} \, t\right\} \exp \left\{-\frac{a
  (3a-1)}2 \int_0^t
      \frac{1}{\hat K_s} \, ds \right\} \,   \hat K_t^{a} .\]        
It\^o's formula shows that $N_t$ is a local $\Prob^*$-martingale
satisfying
\[  d N_t = N_t \,  a\, \sqrt{\frac{1-\hat K_t}
        {\hat K_t}} \, dB_t^*, \;\;\;\;
           N_0 = x^a\]
One can
 show it is a martingale by using Girsanov to see that
\[  d \hat K_t = \left[2a - 4a \hat K_t\right]
     \, d t+ \sqrt{\hat K_t \, (1-\hat K_t)}
      \, d\tilde B_t,\]
  where $\tilde B_t$ is a Brownian motion in the
  new measure $\tilde \Prob$.  By comparison with a Bessel
  process, we see that the solution exists for all time.
  Equivalently, we can say that
  \[    \hat  M_t := \hat M_t^* \, N_t , \]
  is a $\Prob$-martingale with $\hat M_0
   = x^a$.  (Although $M_t^*$ is only a local
  martingale, the time-changed version $\hat M_t^*:=
  M_{\sigma(t)}^*$ is a martingale.)

  Using \eqref{mar18.4}  we see that $\E\left[\Phi_\infty^b \mid
  \gamma_{\sigma(t)}\right]  \leq c \, \hat K_t^a \,  \hat \Phi_t^b .$
  If $\epsilon = e^{-at}$, then 
  
\begin{eqnarray*}
\E\left[   \Phi^b ; \sigma(t) < \infty \right]
 & = & c\,\E\left[\E( \Phi^b  \, 1\{ \sigma(t) < \infty\}  \mid \gamma_{\sigma(t)})\right]\\
& \leq & c\, \E\left[\hat K_t^a \, \hat \Phi^b_t ; \sigma(t) < \infty \right]\\
& = & c\,e^{-\lambda t} \, e^{(1-4a)at} \, (1-x)^{4a-1}
  \hat M_0^{-1} \,
\E\left[ \, \hat M_t \; (1-\hat K_t)^{1-4a} 
 ; \sigma(t) < \infty \right]\\
& = &c\, e^{a(1-6a)t} \,x^a\, (1-x)^{4a-1}\, \tilde \E\left[  (1-\hat K_t)^{1-4a}
\right]\\
&  = &c\, \epsilon ^{6a-1} \, x^a\, (1-x)^{4a-1} \, 
 \tilde \E\left[  (1-\hat K_t)^{1-4a}
\right].
\end{eqnarray*}

So the result follows once we show that
\[    \tilde \E\left[  (1-\hat K_t)^{1-4a}
\right]< \infty\]
is uniformly bounded for $t \geq t_0$.  The argument
for this proceeds as in \cite{Japan}.
If we do the
change of variables $\hat K_t = [1 - \cos \Theta_t]/2$, then
It\^o's formula shows that
\[   d \Theta_t = \left(4a - \frac 12\right) \, \cot \Theta_t\, dt  + dB_t.\]
This is a radial Bessel process that  never reaches the boundary.
It is known that the invariant distribution is proportional to $\sin^{8a-1}
 \theta$ and that  it approaches the invariant distribution
exponentially fast.  One then computes that the invariant distribution
for $\hat K_t$ is proportional to $x^{4a-1} \, (1-x)^{4a-1}$.
In particular, $ (1-\hat K_t)^{1-4a}$ is integrable with respect
to the invariant distribution.

 \section{Proof of Lemma \ref{mar19.lemma1}}  \label{lemmasec}
 
 We prove the first part of Lemma \ref{mar19.lemma1} for $x_1=0,\,y_1=1$. Other cases follow from this and a M\"obius transformation sending $x_1,y_1$ to $0,1$.
 \begin{lemma}   \label{Hlemma}
 There exists $c< \infty$ such that
 if  $D$ is a simply connected
 subdomain of $\Half$ containing $0 ,1$ on its boundary, then
 \[   |\p_x H_D(0,1)| + |\p_y H_D(0,1)|
  \leq c \, \delta^{-1} \, H_D(0,1),\]
  \[ |\p_{xx} H_D(0,1)| + |\p_{xy} H_D(0,1)|
    + |\p_{yy} H_D(0,1)| \leq c\, \delta^{-2} \, H_D(0,1),\]
 where $\delta = \dist(\{0,1\}, \p D \cap \Half)$. 
 
 \end{lemma}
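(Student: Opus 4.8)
The plan is to reduce everything to the explicit half–plane kernel via conformal covariance, and then to control the resulting logarithmic derivatives of the uniformizing map by the Koebe distortion theorem applied on a \emph{reflected} domain.

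First I would fix a conformal map $\phi : D \to \Half$. Since $\partial D$ contains an open real neighbourhood $I_0$ of $[0,1]$, the map $\phi$ extends analytically across $I_0$ (boundary regularity for analytic boundary arcs) and is real there. By Schwarz reflection across $I_0$, $\phi$ extends to a univalent map on $D^{*} = D \cup I_0 \cup \{\bar z : z \in D\}$, sending the upper piece to $\Half$, the lower piece to the reflected half–plane, and $I_0$ into $\R$. Because $0,1 \in I_0$ and $I_0$ is real, $0$ and $1$ are interior points of $D^{*}$, and since the distance from a real point is reflection–invariant one checks $\dist(0,\partial D^{*}) = \delta_0 := \dist(0,\partial D\cap\Half)$ and $\dist(1,\partial D^{*}) = \delta_1 := \dist(1,\partial D\cap\Half)$, so that $\delta = \min(\delta_0,\delta_1)$.

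Next, conformal covariance of the Poisson kernel together with $H_\Half(u,v) = (u-v)^{-2}$ gives, for real $x,y$ near $0,1$,
\[ H_D(x,y) = \phi'(x)\,\phi'(y)\,(\phi(x)-\phi(y))^{-2}, \]
with $\phi' > 0$ on $I_0$. Writing $L(x) = \log H_D(x,y)$ I get $\partial_x H_D = H_D\,L'$ with
\[ L'(x) = \frac{\phi''(x)}{\phi'(x)} - \frac{2\,\phi'(x)}{\phi(x)-\phi(y)}, \]
and $\partial_{xx}H_D = H_D(L'^2 + L'')$, where $L''$ is a polynomial in $\phi''/\phi'$, $\phi'''/\phi'$ and $\phi'/(\phi(x)-\phi(y))$. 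The mixed and pure $y$–derivatives are handled identically by symmetry and the product rule, producing the same building blocks evaluated at $0$ and $1$. Thus it suffices to bound, at the interior points $0$ and $1$ of $D^{*}$, the quantities $|\phi''/\phi'|$, $|\phi'''/\phi'|$ and $|\phi'/(\phi(0)-\phi(1))|$.

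The first two are exactly the Koebe distortion estimates: rescaling $\phi$ on the disc $D(0,\delta_0)\subset D^{*}$ to a normalized univalent function on the unit disc and invoking the Bieberbach bounds $|a_2|\le 2$, $|a_3|\le 3$ yields $|\phi''(0)/\phi'(0)|\le 4/\delta_0$ and $|\phi'''(0)/\phi'(0)|\le C/\delta_0^{2}$, and similarly at $1$ with $\delta_1$. The remaining factor is the one genuine obstacle: I must bound $|\phi'(0)/(\phi(0)-\phi(1))|$, and in the relevant regime $\delta_0\le 1$ the point $1$ lies \emph{outside} the disc $D(0,\delta_0)$, so the growth theorem does not apply directly. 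Here I would use injectivity on the whole reflected domain: by the Koebe $\tfrac14$–theorem $\phi(D(0,\delta_0)) \supseteq D(\phi(0),\tfrac14\delta_0|\phi'(0)|)$, while $1\in D^{*}\setminus D(0,\delta_0)$ forces $\phi(1)\notin \phi(D(0,\delta_0))$, whence $|\phi(0)-\phi(1)|\ge \tfrac14\delta_0|\phi'(0)|$ and $|\phi'(0)/(\phi(0)-\phi(1))|\le 4/\delta_0$. (For $\delta_0\ge 1$ the growth theorem gives the same factor as an $O(1)$ bound, which is the contribution of the separation $|0-1|=1$; this is why the sharp estimate carries $\min(1,\delta)$, matching the definition of $\delta$ in Lemma \ref{mar19.lemma1}.) Assembling these gives $|L'(0)|\le C\min(1,\delta)^{-1}$ and $|L''(0)|\le C\min(1,\delta)^{-2}$, and likewise at $1$ and for the mixed term, which is the assertion. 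Uniformity of the constant in a neighbourhood of the marked points is immediate, since every bound depends on $D$ only through $\delta_0$ and $\delta_1$.
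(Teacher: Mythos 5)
Your proof is correct, and it follows the paper's skeleton up to a point --- uniformize to $\Half$, write $H_D$ via conformal covariance, extend across the real boundary by Schwarz reflection, and invoke the Bieberbach coefficient bounds $|a_2|\le 2$, $|a_3|\le 3$ for the local distortion estimates --- but it genuinely diverges at the one step where something beyond local distortion is required, namely the denominator term. The paper kills that term by normalization: it chooses $g$ with $g(0)=0$, $g(1)=1$, $g'(0)=1$, so $g(1)-g(0)=1$ identically, and then uses domain monotonicity $H_D(0,1)\le H_\Half(0,1)=1$ to get $g'(1)\le 1$; after that, every term produced by differentiating $g'(x)g'(y)[g(y)-g(x)]^{-2}$ is controlled by the distortion bounds at $0$ and $1$ alone. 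You instead keep an arbitrary Riemann map and must bound $|\phi'(0)/(\phi(0)-\phi(1))|$, which you do with an extra geometric ingredient the paper never needs: the Koebe $\frac14$-theorem on $D(0,\delta_0)$ combined with injectivity of the reflected map on all of $D^*$, forcing $\phi(1)$ outside the image disk. That argument is sound ($D(0,\delta_0)\subset D^*$, $1\in D^*$, and univalence of the reflection all hold as you claim), so your route costs one more lemma but buys precision: it makes explicit that the bound is really in terms of $\min(1,\delta)^{-1}$, i.e.\ that the separation $|0-1|$ must enter. Indeed, strictly speaking Lemma \ref{Hlemma} fails for large $\delta$ (take $D=\Half$, or $\Half$ minus a distant slit: the left-hand sides stay of order $1$ while $\delta^{-1}\to 0$); the paper's computation implicitly assumes $\delta\lesssim 1$, which is harmless since Lemma \ref{mar19.lemma1} caps $\delta$ by $|x_1-y_1|$, but your version records this correctly. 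One cosmetic remark: you import the hypothesis that $\p D$ contains a real neighborhood of all of $[0,1]$ from Lemma \ref{mar19.lemma1}, but your argument only needs free real boundary arcs near $0$ and near $1$, which is exactly what $\delta>0$ provides; the reflected domain $D^*$ is still open and connected, and your injectivity step goes through unchanged in that generality.
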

\begin{proof}
Let $g: D \rightarrow \Half$ be a conformal transformation
 with $g(0) = 0, g(1) = 1, g'(0) = 1$.  Then if $|x| < \delta,
 |y-1|< \delta$, 
 \begin{equation} \label{mar18.3}
   H_D(x,y) = \frac{g'(x) \, g'(y)}{ [g(y) - g(x)]^{2}}.
   \end{equation}
 In particular $g'(0) \, g'(1) = H_D(0,1) \leq H_\Half(0,1)
  = 1$ and hence $g'(1) \leq 1$.
   Using Schwartz reflection we can extend $g$ to be a
   conformal transformations of disks of radius $\delta$
   about $0$ and $1$.  
    By the distortion estimates (the fact that $|a_2| \leq 2,
    |a_3| \leq 3$ for schlicht functions) we have
  \[   |g''(0)| \leq 4 \, \delta^{-1} \, g'(0)
   \leq 4 \, \delta^{-1}, \;\;\;\;
     |g'''(0) | \leq 18 \, \delta^{-2} \, g'(0)
       \leq 18 \, \delta^{-2} ,\]
  and similarly $|g''(1)| \leq 4 \, \delta^{-1}\, g'(1)\,$ and $
  |g'''(1)| \leq 18 \, \delta^{-1}\, g'(1)$.  By direct differentiation
  of the right-hand side of \eqref{mar18.3}
  we get the result.
    \end{proof}

 \begin{lemma}  There exists $c < \infty$ such that if
 \[ x_1 < y_1 \leq 0 < 1 \leq x_2 < y_2 ,\]
 $\tilde \Psi_D(\x,\y)$ is as in \eqref{pat.1},
 and $z_1 \in \{x_1,y_1\}, z_2 \in \{x_2,y_2\}$,
 then
 \[    |\p_{z_1} \tilde \Psi_D(\x,\y)|
    + |\p_{z_2} \tilde \Psi_D(\x,\y) |
        \leq c \, \delta^{-1} \, \Psi_D(\x,\y),\]
  \[   |\p_{z_1z_2} \tilde \Psi_D(\x,\y)|
     \leq c \, \delta^{-2} \, \tilde \Psi
       _D(\x,\y),\]
 where
 \[
 \delta:=\min\left\{\{|w_1-w_2|;\,w_1\neq w_2\text{ and } w_1,w_2\in\{x_1,x_2,y_1,y_2\}\},\,\,\dist\left[\{x_1,y_1,x_2,y_2\}, \Half \setminus D\right]
 \right\}.
 \]
 \end{lemma}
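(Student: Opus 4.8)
The plan is to differentiate the closed form \eqref{pat.1} directly. Writing
\[
E \;:=\; \exc_D([x_1,y_1],[x_2,y_2]) \;=\; \int_{x_1}^{y_1}\!\int_{x_2}^{y_2} H_D(r,s)\,ds\,dr,
\qquad \Theta(E):=\phi\!\left(e^{-E}\right),
\]
we have $\tilde\Psi_D(\x,\y)=\Theta(E)$. Since $D$ (hence the kernel $H_D$) is held fixed and $H_D(r,s)$ is real-analytic for $r,s$ interior to the two boundary arcs, $E$ is smooth in $x_1,y_1,x_2,y_2$ and moving an endpoint only moves a limit of integration. The chain rule and Leibniz then give
\[
\p_{z_1}\tilde\Psi_D=\Theta'(E)\,\p_{z_1}E,
\qquad
\p_{z_1z_2}\tilde\Psi_D=\Theta''(E)\,\p_{z_1}E\,\p_{z_2}E+\Theta'(E)\,\p_{z_1z_2}E,
\]
where $\p_{z_1}E=\pm\int_{x_2}^{y_2}H_D(z_1,s)\,ds$, $\p_{z_2}E=\pm\int_{x_1}^{y_1}H_D(r,z_2)\,dr$, and $\p_{z_1z_2}E=\pm H_D(z_1,z_2)$.

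Next I would record the facts about $\phi$ that I need. On $(0,1)$ the function $\phi$ is positive and smooth, $\phi(u)\sim c\,u^{a}$ as $u\to0^+$, and by Gauss's formula (using $4a-1>0$) one gets $\phi(1)=1$. Hence the logarithmic derivative $u\phi'(u)/\phi(u)$ is bounded on $(0,1)$, which is exactly $|\Theta'(E)|\le C\,\Theta(E)$ for all $E>0$; similarly $|\Theta''(E)|\le C\,\Theta(E)$ once $E$ is bounded away from $0$. For the derivatives of $E$ I would use the domain monotonicity $H_D\le H_\Half$ together with the separation $z_2-z_1\ge x_2-y_1\ge\delta$ (valid for every cross-pair since $z_1\le y_1<x_2\le z_2$). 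This gives $|\p_{z_1}E|,|\p_{z_2}E|\le\int_{\text{arc}}H_\Half\le\delta^{-1}$ and $|\p_{z_1z_2}E|=H_D(z_1,z_2)\le(z_2-z_1)^{-2}\le\delta^{-2}$. Combining, the first–derivative bound $|\p_{z_1}\tilde\Psi_D|\le c\,\delta^{-1}\tilde\Psi_D$ follows at once, and the term $\Theta'(E)\,\p_{z_1z_2}E$ of the second derivative is bounded by $C\,\Theta(E)\,\delta^{-2}=C\,\delta^{-2}\tilde\Psi_D$.

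The genuinely delicate term is $\Theta''(E)\,\p_{z_1}E\,\p_{z_2}E$. For $8/3<\kappa<4$ the hypergeometric factor carries a non-smooth term $A\,(1-u)^{4a-1}$ at $u=1$, with $A=\Gamma(4a)\Gamma(1-4a)/[\Gamma(2a)\Gamma(1-2a)]\neq0$ and $4a-1\in(1,2)$, so that $\Theta''(E)$ blows up like $E^{\,4a-3}$ with $4a-3\in(-1,0)$ as the two arcs separate ($E\to0$, $\tilde\Psi_D\to1$). The crude bound $|\p_{z_i}E|\le\delta^{-1}$ is then too weak. To absorb the blow-up I would prove the Harnack-type estimate
\[
|\p_{z_1}E|\le C\,\delta^{-1}\,E \qquad(\text{and likewise for } z_2).
\]
Granting it, for small $E$ one gets $|\Theta''(E)\,\p_{z_1}E\,\p_{z_2}E|\le C\,E^{4a-3}\,\delta^{-2}E^{2}=C\,\delta^{-2}E^{4a-1}\le C\,\delta^{-2}\le C'\,\delta^{-2}\tilde\Psi_D$, while for $E$ bounded below the term is controlled by $|\Theta''(E)|\le C\,\Theta(E)$ and $|\p_{z_i}E|\le\delta^{-1}$. (For $\kappa\le8/3$ one has $\phi\in C^2([0,1])$, so $|\Theta''|\le C\Theta$ holds uniformly and the Harnack estimate is not needed.)

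The proof of the Harnack estimate is the main obstacle. I would interpret $\p_{y_1}E=F(y_1)$, where $F(r)=\int_{x_2}^{y_2}H_D(r,s)\,ds=\p_n h(r)$ is the inward normal derivative at $r$ of the positive harmonic function $h(w)=$ (harmonic measure of $[x_2,y_2]$ in $D$). The clause $\dist(\{x_1,y_1,x_2,y_2\},\Half\setminus D)\ge\delta$ guarantees $B(y_1,\delta)\cap\Half\subset D$, and $h$ vanishes on $B(y_1,\delta)\cap\R\subset(x_1,x_2)$. A boundary Harnack/Carleson estimate for $h$ in this ball then yields $F(r)\asymp F(y_1)$ for $r\in[y_1-\tfrac{\delta}{2},y_1]$, so that $E=\int_{x_1}^{y_1}F\ge\int_{y_1-\delta/2}^{y_1}F\ge c\,\delta\,F(y_1)$, which is the claim. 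The remaining work is to carry out this boundary-Harnack step cleanly (it is what forces the distance-to-$\Half\setminus D$ term into $\delta$) and to check the exponent bookkeeping uniformly over the range $\kappa<4$; the uniformity of $c$ in neighborhoods of the points is immediate since all estimates depend on the configuration only through $\delta$ and the bounded ratios of $\phi$.
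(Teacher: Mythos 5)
Your proposal is correct and follows essentially the same route as the paper's proof: differentiate $\phi(e^{-\exc_D})$ by the chain rule, establish the refined Harnack-type bound $|\p_{z_i}\exc_D|\le c\,\delta^{-1}\exc_D$ by comparing the kernel at an endpoint with its values on a length-$\delta/2$ subinterval and integrating, and control the singularity of $\phi''$ at $u=1$ via hypergeometric asymptotics. The paper does exactly this (phrasing the boundary-Harnack step as the Harnack inequality applied to $H_D(\cdot,s)$ and using the refined bounds throughout, rather than your split into small versus bounded-below $\exc_D$), so your case analysis is only a bookkeeping variant of the same argument.
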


 \begin{proof}
 Let   
\[  \tilde \Psi_D(\x,\y)
     =   \phi\left(u_D(\x,\y) \right). \]
where 
\[   u_D (\x,\y) = e^{-\exc_D(\x,\y)}
, \;\;\;\; \exc_D(\x,\y) = 
\int_{x_1}^{y_1}
      \int_{x_2}^{y_2} H_D(r,s) \,{dr\, ds}.\]

Using the Harnack inequality we can see that
for $j=1,2$, 
\[       H_D(x,s)  \asymp H_D(x_j,s),\;\;\;\;
    H_D(r,y) \asymp H_D(r, y_j) \]
if $|x-x_j| \leq \delta/2, |y-y_j| \leq \delta/2$.
From this we see that  
 \[    \int_{x_2}^{y_2} H_D(z_1,s) \, ds
  +  \int_{x_1}^{y_1} H_D(r,z_2) \, ds
     \leq c \, \delta^{-1} \, \exc_D(\x,\y),\]
  \[   H_D(z_1,z_2) \leq 
      c \, \delta^{-2} \, \exc_D(\x,\y).\]
 Let $z_1$ be $x_1$ or $y_1$ and let $z_2$ be
 $x_2$ or $y_2$. Then,
 \[  \p_{z_1} \, \tilde \Psi_D(\x,\y)
    =   {\phi'( u_D (\x,\y))}
        \,\p_{z_1} u_D(\x,\y)  \]
  \[ \p_{z_1z_2} \tilde \Psi_D(\x,\y)
    = \phi''( u_D (\x,\y))
        \,[\p_{z_1} u_D(\x,\y) ]\,[
        \p_{z_2} u_D(\x,\y)]
      +\phi'(u_D (\x,\y))\,\p_{z_1z_2}u_D(\x,\y).
        \]
 \[  \p_{z_1}  u_D(\x,\y) =
  \left[\pm \int_{x_2}^{y_2} H_D(z_1,s) \, ds\right]\,  u_D(\x,\y) .\]
  \[ \p_{z_2}  u_D(\x,\y) =  
   \left[\pm \int_{x_1}^{y_1} H_D(r,z_2) \, ds\right]\,  u_D(\x,\y).\]
 \[  \p_{z_2z_1}  u_D(\x,\y) =    \left[\pm \int_{x_1}^{y_1}  \int_{x_2}^{y_2}H_D(r,z_2) \, dr
    \, H_D(z_1,s) \, ds
    \pm H_D(z_1,z_2)\right] \,  u_D(\x,\y) \]
  This gives
 \[  | \p_{z_1}  u_D(\x,\y)| + |\p_{z_2}  u_D(\x,\y)|
   \leq c\, \delta^{-1}  \,\exc _D(\x,\y) \, u_D(\x,\y),\]
  \[  \left|\p_{z_2z_1}  u_D(\x,\y)\right|
    \leq c\, \delta^{-2}  \,\exc _D(\x,\y) \, u_D(\x,\y).\]
  \[ \frac{| \p_{z_2z_1}  u_D(\x,\y)|}
  { u_D(\x,\y)} \leq c \, \delta^{-2} \, \exc_D(\x,\y).\]
  The result will follow if we show that 
   \[     \frac{(1-x) \, |\phi''(1-x)|}{\phi(x)},\;\;\;\;
       \frac{\phi'(x)}{\phi(x)},\]
    are uniformly bounded for $x >x_0$.
    
Recall that $u(x) = c\, x^a \, F(x)$ where 
  $  F(x) =\,_2F_1(2a,1-2a,4a;x)$.  We recall that $F$
  is analytic in the unit disk with power series
  expansion
  \[          F(x) = 1 + \sum_{n=1}^\infty b_n \, x^n ,\]
  where the coefficients $b_j$ satisfy
  \[                 b_n =   C \, n^{4a-2} \, [1 + O(n^{-1})].\]
    We therefore get asymptotic expansions for the coefficients
    of the derivatives of $F$.
The important thing for us is that if $\kappa < 8$, then
$4a-1 > 1$ and we have as $x\downarrow 1$
\[  F(1-x) =O(1),\;\;\;\; F'(x) = o(x^{-1}), \;\;\;
  F''(x)  o(x^{-2}).\]
  In other words, the quantities
  \[    F(x) , \;\;\;\frac{(1-x) \, F'(x)}{F(x)}, \;\;\;\;
     \frac{(1-x)^2 \, F''(x)}{F(x)}, \]
 are uniformly bounded for $0 \leq x < 1$.  If
 $g(x) = x^a \, F(x)$, then
 \[  g'(x) = g(x) \, \left[\frac{a}{x} + \frac{F'(x)}{F(x)}
  \right],\]
  \[   g''(x) = g(x) \, \left[\left(  \frac{a}{x} + \frac{F'(x)}{F(x)}
  \right)^2
       - \frac{a}{x^2} + \frac{F''(x)}{F(x)} - \frac{F'(x)^2}{
        F'(x)^2}\right].\]
 Therefore, for every $x_0 >0$, the quantities
 \[ \phi(x) , \;\;\;\frac{(1-x) \, \phi'(x)}{\phi(x)}, \;\;\;\;
     \frac{(1-x)^2 \, \phi''(x)}{\phi(x)}, \]
 are uniformly bounded for $x_0 < x < 1$.
  \end{proof}

\end{document}